\theoremstyle{plain}
\newtheorem{theorem}{Theorem}
\newtheorem{lemma}[theorem]{Lemma}
\newtheorem{proposition}[theorem]{Proposition}
\newtheorem{corollary}[theorem]{Corollary}
\newtheorem{conjecture}[theorem]{Conjecture}
\numberwithin{theorem}{section}
\numberwithin{equation}{theorem}
\theoremstyle{definition}
\newtheorem{definition}[theorem]{Definition}
\newtheorem{example}[theorem]{Example}
\newtheorem{remark}[theorem]{Remark}
\newtheorem{question}[theorem]{Question}
\newtheorem*{question*}{Question}
\newcommand{\fm}{\mathfrak{m}}
\newcommand{\bfl}{\mathbf{l}}
\DeclareMathOperator{\End}{End}
\DeclareMathOperator{\Ext}{Ext}
\DeclareMathOperator{\Hom}{Hom}
\DeclareMathOperator{\GKdim}{GKdim}
\DeclareMathOperator{\gldim}{gldim}
\DeclareMathOperator{\MaxSpec}{MaxSpec}
\DeclareMathOperator{\Spec}{Spec}
\DeclareMathOperator{\Kdim}{Kdim}
\begin{document}

\title{Cancellation problem for \\
AS-regular algebras of dimension three}

\author{X. Tang, H. J. Venegas Ram{\' i}rez and J.J. Zhang}

\address{Tang: Department of Mathematics \& Computer Science, 
Fayetteville State University, Fayetteville, NC 28301,
USA}

\email{xtang@uncfsu.edu}

\address{Venegas Ram{\' i}rez: Departamento de Matem{\'a}ticas,
Universidad Nacional de Colombia, Sede Bogot{\'a}, Colombia}

\email{hjvenegasr@unal.edu.co}

\address{Zhang: Department of Mathematics, Box 354350,
University of Washington, Seattle, Washington 98195, USA}

\email{zhang@math.washington.edu}

\begin{abstract}
We study a noncommutative version of the Zariski cancellation problem
for some classes of connected graded Artin-Schelter regular algebras
of global dimension three.
\end{abstract}

\subjclass[2010]{Primary 16P99, 16W99}

%16P99 View Publications (1991-now) None of the above, but in this section
%16W99 View Publications (1991-now) None of the above, but in this section
% Rings and algebras with additional structure
%16W20 (1991-now) Automorphisms and endomorphisms
%16E65 View Publications (2000-now) Homological
%conditions on rings (generalizations of regular,
%Gorenstein, Cohen-Macaulay rings, etc.)

\keywords{Zariski cancellation problem, Morita cancellation problem, 
Artin-Schelter regular algebra, finite global dimension}

%\thanks{ }

\maketitle

%\tableofcontents

% \setcounter{section}{-1}
\section*{Introduction}
\label{xxsec0}

The classical Zariski cancellation problem for commutative 
polynomial rings has a long history, see a very nice survey 
paper of Gupta \cite{Gu3} written in 2015. A noncommutative 
version of the Zariski cancellation problem was investigated 
as early as 1970s, see papers by Coleman-Enochs \cite{CE} 
and Brewer-Rutter \cite{BR}, and was re-introduced by Bell 
and the third-named author in 2017 in \cite{BZ1}. During the 
past few years several research groups have been making 
significant contributions to this topic, see for example,
\cite{BHHV, BZ1, BZ2, BY, CPWZ1, CPWZ2, CYZ1, CYZ2, Ga1, GKM, GWY, 
LY, LeWZ, LuWZ, LMZ, NTY, Ta1, Ta2, TZZ, WZ}. Very recently, the
Zariski cancellation problem was introduced for commutative 
Poisson algebras by Gaddis-Wang \cite{GW}.

We are following the terminology introduced in \cite{Gu3,BZ1}.
Recall that an algebra $A$ is called {\it cancellative} if 
any algebra isomorphism 
$$A[t] \cong B[t]$$ 
of polynomial extensions for some algebra $B$ implies that 
$$A\cong B.$$
The famous Zariski Cancellation Problem (abbreviated as ZCP) 
asks if 

\medskip

{\it the commutative polynomial ring $\Bbbk [x_1,\dots,x_n]$ 
over a field $\Bbbk$ is cancellative}

\medskip

\noindent 
for $n\geq 1$, see \cite{Kr, Gu3, BZ1}. It is well-known 
that $\Bbbk[x_1]$ is cancellative by a result of 
Abhyankar-Eakin-Heinzer in 1972 \cite{AEH}. For $n=2$,
$\Bbbk[x_1,x_2]$ is cancellative by a result of Fujita in 
1979 \cite{Fu} and Miyanishi-Sugie in 1980 \cite{MS} in 
characteristic zero and by a result of Russell in 1981 
\cite{Ru} in positive characteristic. The ZCP for $n\geq 3$ 
has been open for many years. A major breakthrough in this 
research area is a remarkable result of Gupta in 2014 
\cite{Gu1, Gu2} which settled the ZCP negatively in positive 
characteristic for $n\geq 3$. {\it The ZCP in characteristic 
zero remains open for $n\geq 3$.} Examples of non-cancellative 
algebras were given by Hochster \cite{Ho}, Danielewski \cite{Da}
and Gupta \cite{Gu1, Gu2}, and can be found in 
\cite[Example 2.5]{LuWZ}.

Our main goal is to study the ZCP for noncommutative noetherian 
connected graded Artin-Schelter regular algebras [Definition 
\ref{xxdef0.1}] of global dimension three. 

\begin{definition}\cite[p.171]{AS}
\label{xxdef0.1}
A connected graded algebra $A$ is called 
{\it Artin-Schelter Gorenstein} (or {\it AS Gorenstein}, 
for short) if the following conditions hold:
\begin{enumerate}
\item[(a)]
$A$ has injective dimension $d<\infty$ on
the left and on the right,
\item[(b)]
$\Ext^i_A({}_A\Bbbk, {}_{A}A)=\Ext^i_{A}(\Bbbk_A,A_A)=0$ for all
$i\neq d$, and
\item[(c)]
$\Ext^d_A({}_A\Bbbk, {}_{A}A)\cong 
\Ext^d_{A}(\Bbbk_A,A_A)\cong \Bbbk(\bfl)$ for some
integer $\bfl$. %Here $\bfl$ is called the {\it AS index} of $A$.
\end{enumerate}
%In this case, we say $A$ is of type $(d,\bfl)$. 
If in addition,
\begin{enumerate}
\item[(d)]
$A$ has finite global dimension, and
\item[(e)]
$A$ has finite Gelfand--Kirillov dimension,
\end{enumerate}
then $A$ is called {\it Artin--Schelter regular} (or {\it AS
regular}, for short) of dimension $d$.
\end{definition}

AS-regular algebras are considered as a noncommutative analogue 
of the commutative polynomial rings. We refer to \cite{Le, LMZ, St} 
for related concepts such as the Auslander regularity and the 
Cohen-Macaulay property which will be used later in Proposition 
\ref{xxpro0.6}. It is well-known that the only AS-regular algebra 
of global dimension one is the polynomial ring $\Bbbk[x_1]$.
%, which is cancellative by a classical result \cite[Corollary 3.4]{AEH}. 
Combining classical results in \cite{Fu, MS} with \cite[Theorem 0.5]{BZ1}, 
every AS-regular algebra of global dimension two (over a base 
field of characteristic zero) is cancellative. On the other hand, 
by the results of Gupta in \cite{Gu1, Gu2}, not every AS-regular 
algebra of global dimension three (or higher) is cancellative. 
Therefore it is natural and sensible to ask which AS-regular 
algebras of global dimension three (or higher) are cancellative. 
In \cite[Corollary 0.9]{LMZ}, the authors showed that several 
classes of AS-regular algebras of global dimension three are 
cancellative. We say $A$ is {\it PI} if it satisfies a 
polynomial identity. Our first result is

\begin{theorem}
\label{xxthm0.2}
Suppose ${\text{char}}\; \Bbbk=0$. Let $A$ be a noetherian 
connected graded AS-regular algebra of global dimension three 
that is generated in degree $1$. If $A$ is not PI, then it is 
cancellative.
\end{theorem}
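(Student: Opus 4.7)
The plan is to combine the classification of AS-regular algebras of global dimension three generated in degree one with an established cancellation criterion for connected graded noetherian Auslander-regular Cohen-Macaulay domains.

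First, I would appeal to the classification due to Artin-Schelter and Artin-Tate-Van den Bergh: any noetherian AS-regular algebra $A$ of global dimension three generated in degree one is either of quadratic type (three generators with three quadratic relations) or cubic type (two generators with two cubic relations). Each such $A$ is a noetherian domain of GK dimension three, Auslander regular, and Cohen-Macaulay, and is determined up to isomorphism by geometric data $(E,\sigma,\mathcal{L})$ with $E$ the point scheme and $\sigma\in\Aut(E)$. A theorem of Artin-Tate-Van den Bergh (extended by Stafford and Van den Bergh) says that $A$ is PI precisely when $\sigma$ has finite order; thus the hypothesis ``$A$ not PI'' is equivalent to $\sigma$ having infinite order.

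Second, I would invoke a cancellation criterion of the type developed in \cite{BZ1, LMZ}: a noetherian connected graded Auslander-regular Cohen-Macaulay domain of GK dimension at least two is cancellative provided that it satisfies an appropriate rigidity condition. The natural condition in this setting is that $A$ admits no nontrivial locally nilpotent derivation, equivalently $\ML(A)=A$. For a connected graded algebra generated in degree one, the top-degree component of any LND is again an LND, so it suffices to rule out nonzero homogeneous LNDs. The core rigidity step is then to show that when $\sigma$ has infinite order, any nonzero homogeneous LND $\delta$ on $A$ would descend to a locally nilpotent derivation of the twisted homogeneous coordinate ring $B(E,\mathcal{L},\sigma)$; compatibility of such a derivation with the $\sigma$-twisted multiplication forces $\sigma$ to have finite order, contradicting the non-PI hypothesis and hence forcing $\delta=0$.

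The main obstacle is to make this rigidity argument uniform across the entire classification. For generic Sklyanin algebras (with $E$ a smooth elliptic curve) the argument is cleanest, but the degenerate families (quadratic algebras with $E$ a nodal cubic, a cuspidal cubic, or a triangle of lines; cubic algebras with degenerate point schemes; and the various ``linear'' families that become non-PI only for special parameter values) must be inspected individually, since the quotient to $B(E,\mathcal{L},\sigma)$ may lose information or $B$ itself may no longer be a domain. Since the list of such families is short and well-tabulated in the literature, the case analysis is tedious but tractable, and together with the two reductions above it will yield the theorem.
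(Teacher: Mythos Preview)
Your strategy has a genuine gap: the claim that every non-PI AS-regular algebra of global dimension three satisfies $\ML(A)=A$ is false. Take $A=\Bbbk_q[x,y]\otimes\Bbbk[t]$ with $q$ not a root of unity. This is a noetherian connected graded AS-regular domain of global dimension three, generated in degree one, and it is not PI; yet $\partial/\partial t$ extends to a nonzero locally nilpotent derivation of $A$, so $\ML(A)\subsetneq A$. More generally, whenever $Z(A)=\Bbbk[t]$ with $t$ a degree-one central element and $A/(t)$ again AS-regular of dimension two (which happens for an explicit family of quadratic algebras), one cannot expect LND-rigidity to hold, and your proposed descent of an LND to $B(E,\mathcal L,\sigma)$ will not produce a contradiction in these cases. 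The vague step ``compatibility \ldots\ forces $\sigma$ to have finite order'' therefore cannot be made to work uniformly.

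The paper's proof proceeds along an entirely different axis. It never attempts to show $\ML(A)=A$. Instead it first uses the Smith--Zhang bound $\GKdim Z(A)\le \GKdim A-2=1$ for non-PI Ore domains, and then branches on the shape of the center. If $Z(A)\not\cong\Bbbk[t]$, a retractability argument (Lemma~\ref{xxlem3.7}(1)) applies directly. If $Z(A)=\Bbbk[t]$ and $A/(t)$ has infinite global dimension, one uses the $\mathcal P$-discriminant for $\mathcal P=$``finite global dimension'' (Lemma~\ref{xxlem3.7}(3)). The delicate residual case is exactly the one that breaks your approach: $Z(A)=\Bbbk[t]$ with $A/(t)$ an AS-regular algebra of dimension two (so $A/(t)\cong\Bbbk_q[x,y]$ or the Jordan plane). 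Here the paper classifies the possible presentations of $A$, and for each one chooses a Morita-invariant property $\mathcal P$ (of the form ``not Morita equivalent to $A/(t)$'' or ``Morita equivalent to $A/(t-1)$'') whose discriminant is $t$, using the explicit Morita-inequivalences in Lemma~\ref{xxlem2.1}; Theorem~\ref{xxthm1.5}/Lemma~\ref{xxlem3.6} then gives cancellation. The tensor-product example above is handled separately via Lemma~\ref{xxlem3.2}. So the case analysis in the paper is organised by the center and the quotient $A/(t)$, not by the ATV point-scheme data, and the rigidity input is Morita-theoretic rather than LND-based.
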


Theorem \ref{xxthm0.2} covers \cite[Corollary 0.9]{LMZ}. 
For an algebra $A$, let $Z(A)$ denote the center of $A$. Let 
$\GKdim A$ (respectively, $\gldim A$) be the Gelfand-Kirillov 
dimension (respectively, the global dimension) of $A$. For 
AS-regular algebras of higher global dimension, we have the 
following.

\begin{theorem}
\label{xxthm0.3}
Suppose ${\text{char}}\; \Bbbk=0$. Let $A$ be a noetherian 
connected graded domain of finite global dimension that is 
generated in degree $1$. Suppose that 
\begin{enumerate}
\item[(a)]
$\GKdim Z(A)\leq 1$, and 
\item[(b)]
$\gldim A/(t)=\infty$ for every homogeneous central element 
$t$ in $Z(A)$ of positive degree.
\end{enumerate}
Then $A$ is cancellative.
\end{theorem}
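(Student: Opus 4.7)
The strategy is to locate the polynomial variable $t$ inside the center of $R := A[t]$ and use hypothesis (b) to rule out degenerate alternative presentations $R \cong B[s]$. Given a hypothetical isomorphism $\phi : A[t] \xrightarrow{\sim} B[s]$, the first task is to reduce to the setting in which $A[t]$ and $B[s]$ are connected graded with $t, s$ in degree one and $\phi$ is a graded isomorphism. Under hypothesis (a) one has $\GKdim Z(A[t]) = \GKdim Z(A) + 1 \leq 2$, and this rigidity of the center, combined with the noetherian and finite global dimension hypotheses, is what enables a graded reduction in the style of the general rigidity results of \cite{BZ1, LMZ, LuWZ}. The characteristic-zero hypothesis enters at this stage, through the usual locally nilpotent derivation machinery used to detect the grading.

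Once $\phi$ is graded, $\phi(t) \in B[s]_1$ is central, and the degree-one part of $Z(B[s]) = Z(B)[s]$ decomposes as $Z(B)_1 \oplus \Bbbk\cdot s$, so that
\[
\phi(t) \;=\; b + \lambda s, \qquad b \in Z(B)_1,\ \lambda \in \Bbbk.
\]
If $\lambda \neq 0$, the substitution $s \mapsto -\lambda^{-1} b$ identifies $B[s]/(\phi(t))$ with $B$, and $\phi$ descends through $A \cong A[t]/(t)$ to the desired isomorphism $A \cong B$. If $\lambda = 0$, then $\phi(t) = b \in Z(B)_1$ is necessarily nonzero and
\[
A \;\cong\; A[t]/(t) \;\cong\; B[s]/(b) \;=\; (B/(b))[s],
\]
exhibiting $A$ as a polynomial extension $C[s]$ with $C := B/(b)$. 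The element $c \in A_1$ corresponding to $s$ under this isomorphism is then a nonzero homogeneous central element of degree one in $A$ with $A/(c) \cong C$, so $\gldim A/(c) = \gldim A - 1 < \infty$. This directly contradicts hypothesis (b). Hence only the case $\lambda\neq 0$ can occur, and $A \cong B$ follows.

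The main obstacle is the graded reduction: in full generality $\phi(t) \in B[s]$ may have arbitrary $s$-degree, and rectifying this amounts to showing that the connected graded structure on $A[t]$ is intrinsically determined by its algebra structure. Condition (a) enters decisively here, via the control of $Z(A[t])$ which bounds the ambient commutative geometry the automorphisms of $A[t]$ can exploit; condition (b) then supplies the crucial obstruction in the core graded calculation above, by preventing $A$ itself from admitting any polynomial-ring structure over a central degree-one element.
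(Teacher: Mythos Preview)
Your proposal has a genuine gap at precisely the step you flag as ``the main obstacle'': the graded reduction. You assert that one can arrange $B$ to be connected graded with $s$ in degree one and $\phi$ graded, but you do not carry this out, and there is no off-the-shelf theorem in \cite{BZ1, LMZ, LuWZ} that does it for you. A priori $B$ is an arbitrary algebra with no grading, and $\phi(t)$ is an arbitrary central element of $B[s]$; forcing $B$ to inherit a compatible connected grading is essentially the cancellation problem itself. Your clean dichotomy on $\lambda$ only makes sense once the decomposition $\phi(t)=b+\lambda s$ with $b\in Z(B)_1$ is available, and that presupposes the unproved reduction. The vague appeal to ``locally nilpotent derivation machinery'' and ``rigidity of the center'' does not fill the gap: that machinery is exactly what the paper deploys, but it leads to a different argument, not to a graded normal form for $\phi$.

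The paper's proof avoids any graded reduction and instead splits on the isomorphism type of $Z(A)$. If $Z(A)\not\cong\Bbbk[t]$ (including the case $Z(A)=\Bbbk$), then $Z(A)$ is strongly retractable by Abhyankar--Eakin--Heinzer (Lemma~\ref{xxlem3.1}), and Lemma~\ref{xxlem3.7}(1) upgrades this to strong cancellation of $A$ via the $\mathrm{LND}^H$-rigidity framework of \cite{LuWZ}. If $Z(A)\cong\Bbbk[t]$, one takes $\mathcal{P}$ to be the property ``finite global dimension'': hypothesis~(b) gives $\gldim A/(t)=\infty$, while Lemma~\ref{xxlem2.6} (using finite global dimension of $A$, generation in degree one, and $\mathrm{char}\,\Bbbk=0$) yields $\gldim A/(t-\alpha)<\infty$ for every $\alpha\neq 0$. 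Hence the $\mathcal{P}$-discriminant is exactly $t$, a nonzero non-unit in $Z(A)=\Bbbk[t]$, and Theorem~\ref{xxthm1.5} (via Lemma~\ref{xxlem3.7}(3)) gives strong cancellation. Note how hypothesis~(b) is used: not to block a polynomial decomposition of $A$ as in your $\lambda=0$ case, but to pin the discriminant locus at the origin of $\Spec\Bbbk[t]$.
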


We will also show a result similar to Theorem \ref{xxthm0.3} 
for graded isolated singularities which have infinite global 
dimension, see Theorem \ref{xxthm4.3}. For a general noncommutative 
algebra we have the following conjecture, which extends both 
Theorems \ref{xxthm0.2} and \ref{xxthm0.3}.

\begin{conjecture}
\label{xxcon0.4}
Suppose ${\text{char}}\; \Bbbk=0$.
Let $A$ be a noetherian finitely generated prime algebra.
\begin{enumerate}
\item[(1)]
If $\GKdim Z(A)\leq 1$, then $A$ is cancellative.
\item[(2)]
If $\GKdim A=3$ and $A$ is not PI, then $A$ is cancellative.
\end{enumerate}
\end{conjecture}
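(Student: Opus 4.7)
The plan is to extend the Makar-Limanov/LND-rigidity framework used in Theorems \ref{xxthm0.2} and \ref{xxthm0.3} to the ungraded prime setting. The broad strategy, following \cite{BZ1,LuWZ,LMZ}, is to (i) identify invariants of $A$ that are preserved under any isomorphism $A[t] \cong B[t]$, and (ii) show that these invariants recover $A$ up to isomorphism. The invariants of choice are the center $Z(A)$, the Makar-Limanov invariant $\ML(A)$, and their variants defined via locally nilpotent derivations. The conjecture posits that, in the absence of a grading, the hypothesis ``$\GKdim Z(A)\leq 1$'' (or the combined hypothesis of part (2)) should be enough to replace the graded/AS-regular assumptions used in Theorems \ref{xxthm0.2} and \ref{xxthm0.3}.

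For part (1), I would first observe that any isomorphism $\varphi\colon A[t]\to B[t]$ restricts to an isomorphism $Z(A)[t]\cong Z(B)[t]$ of centers. Since $\GKdim Z(A)\leq 1$, the classical Abhyankar-Eakin-Heinzer theorem \cite{AEH} (applied after possibly passing to the normalization) yields $Z(A)\cong Z(B)$. The crux is to lift this isomorphism of centers to an isomorphism $A\cong B$. For this I would try to prove a \emph{strong LND-rigidity} statement: every locally nilpotent derivation $\delta$ of $A[t]$ satisfies $\delta(A)\subseteq Z(A)$, so that $A$ can be characterized inside $A[t]$ as the intersection of kernels of a canonical family of LNDs together with the centralizer of some distinguished central subring. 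The intuition is that $A$, being ``far from commutative'' in the sense $\GKdim Z(A)<\GKdim A$, must admit very few LNDs, and those that exist must factor through the center. Once such a structural characterization is in place, applying $\varphi$ to the characterization yields $A\cong B$.

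For part (2), the natural route is to reduce to part (1) by proving that every noetherian finitely generated prime algebra of $\GKdim$ equal to $3$ that is not PI has $\GKdim Z(A)\leq 1$. This dimension-theoretic claim is plausible on heuristic grounds: a central subring of $\GKdim\geq 2$ inside a $\GKdim 3$ prime algebra tends to force $A$ to be module-finite over its center, hence PI. A proof would require dimension estimates in the spirit of Bergman's gap theorem and the Small-Stafford-Warfield inequality, combined with ring-theoretic bounds on the center of a noetherian prime algebra. With $\GKdim Z(A)\leq 1$ in hand, part (1) then finishes the argument.

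The principal obstacle is the loss of the connected grading. The proofs of Theorems \ref{xxthm0.2} and \ref{xxthm0.3} crucially use Hilbert series, degree filtrations, and the augmentation ideal to pin down automorphisms and to force LNDs to respect the grading. In the ungraded setting one must replace these tools with associated-graded constructions arising from the GK-dimension filtration or a Gabber-style good filtration, and these are considerably subtler (and not automatically noetherian). In particular, proving strong LND-rigidity of $A$ purely from the small-center hypothesis, without invoking a grading, is the central technical difficulty: the danger is that an LND of $A[t]$ could genuinely entangle the ``$A$-direction'' and the ``$t$-direction'' in ways that graded arguments preclude. A secondary obstacle, specific to part (2), is the dimension-theoretic claim $\GKdim Z(A)\leq 1$; while believable, a general proof may require extending the Artin-Stafford-Van den Bergh style classification of noncommutative surfaces to the affine noncommutative setting, which is itself a substantial program.
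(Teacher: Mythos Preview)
The statement you are attempting to prove is \emph{Conjecture} \ref{xxcon0.4}: it is explicitly posed in the paper as an open problem, and the paper gives no proof of it. Immediately after stating it, the authors note that a counterexample to part (1) exists in positive characteristic \cite[Theorem 1.1(b)]{BHHV}, and they describe Corollary \ref{xxcor0.5} only as ``evidence'' for the conjecture. There is therefore no proof in the paper to compare your proposal against.

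Your write-up is honest about this: it is a research plan, not a proof, and you correctly flag the principal obstacle --- the loss of the connected grading. Two remarks on the content of your outline. First, your reduction of part (2) to part (1) is essentially already available in the paper: Lemma \ref{xxlem2.8} (Smith--Zhang) gives $\GKdim Z(A)\le \GKdim A - 2 = 1$ for any finitely generated Ore domain that is not PI, so the dimension-theoretic claim you flag as a ``secondary obstacle'' is in fact settled when $A$ is a domain; the remaining gap is only the passage from ``prime'' to ``domain''. Second, your proposed mechanism for part (1) --- proving that every locally nilpotent derivation of $A[t]$ sends $A$ into $Z(A)$ --- is stronger than what the paper's graded arguments establish even in the graded case, and there is no indication in the paper (or in the cited literature) that such a statement holds in the ungraded prime setting. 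The paper's graded proofs rely on $\mathcal{P}$-discriminants sitting inside $Z(A)\cong\Bbbk[t]$ and on explicit structural features of AS-regular algebras of dimension three (Lemmas \ref{xxlem2.1}, \ref{xxlem3.7}, \ref{xxlem3.8}); none of these tools survive without a grading, which is precisely why the authors leave Conjecture \ref{xxcon0.4} open.
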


Originally this Conjecture was made without the hypothesis 
that ${\text{char}}\; \Bbbk=0$. When ${\text{char}}\; \Bbbk>0$,
a counterexample to part (1) was given in a recent paper 
\cite[Theorem 1.1(b)]{BHHV}. See \cite[Question 4.1]{BHHV} 
for a modified and weaker version of Conjecture \ref{xxcon0.4}(1) 
in positive characteristic. 

The cancellation property of Veronese subrings of skew 
polynomial rings was considered in \cite{CYZ2}. 
We have the following improvement of Theorem \ref{xxthm0.2}
concerning the Veronese subrings which provides some 
evidence for Conjecture \ref{xxcon0.4}.

\begin{corollary}
\label{xxcor0.5}
Suppose ${\text{char}}\; \Bbbk=0$. Let $A$ be a noetherian 
connected graded AS-regular algebra of global dimension three 
that is generated in degree $1$. If $A$ is not PI, then the
$d$th Veronese subring $A^{(d)}$ of $A$ is cancellative for 
every $d\geq 1$.
\end{corollary}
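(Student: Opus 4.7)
The plan is to verify that $A^{(d)}$, re-graded by $A^{(d)}_n := A_{nd}$, satisfies the hypotheses of Theorem \ref{xxthm0.3} (when $A^{(d)}$ has finite global dimension) or of its isolated-singularity analogue Theorem \ref{xxthm4.3} (when $\gldim A^{(d)} = \infty$). Since $A$ is generated in degree one, $A^{(d)}_n = A_1^{nd} = (A^{(d)}_1)^n$, so $A^{(d)}$ is connected graded and generated in degree one. Choosing a $\Bbbk$-basis of $\bigoplus_{i=0}^{d-1} A_i$ realizes $A$ as a finitely generated right $A^{(d)}$-module, forcing $A^{(d)}$ to be noetherian, a domain, and to share its Goldie quotient ring $Q$ with $A$; the same module-finite extension transfers the PI property in the direction that shows $A^{(d)}$ is not PI.

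The key step is to establish $\GKdim Z(A^{(d)}) \leq 1$. Note that a priori $Z(A^{(d)})$ can be strictly larger than $Z(A)^{(d)}$, since an element commuting with $A_d = A_1^d$ need not commute with $A_1$. The argument I would use is the inclusion $Z(A^{(d)}) \hookrightarrow Z(Q)$ together with the classification-based fact that $Z(Q)$ has transcendence degree at most one whenever $A$ is a non-PI noetherian AS-regular algebra of global dimension three generated in degree one (a consequence of the Artin-Schelter and Artin-Tate-Van den Bergh classification of such algebras).

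For condition (b) of Theorem \ref{xxthm0.3}, I would argue that for any positive-degree homogeneous $t \in Z(A^{(d)})$, the quotient $A^{(d)}/(t)$ has infinite global dimension. Since $A^{(d)}$ is AS-Gorenstein of dimension three, such a quotient is AS-Gorenstein of dimension two; finite global dimension would force it to be AS-regular of dimension two, and the structural rigidity of this case would contradict the non-PI hypothesis on $A$ via arguments of the type used in \cite{LMZ}. In the case $\gldim A^{(d)} = \infty$, one instead applies Theorem \ref{xxthm4.3} with its analogous hypotheses for graded isolated singularities, which $A^{(d)}$ is as a Veronese of an AS-regular algebra of dimension three.

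The main obstacle will be the control of $Z(A^{(d)})$: as observed, the center of the Veronese can strictly exceed the Veronese of the center, and bounding its Gelfand-Kirillov dimension relies on precise structural information about $Z(\Frac A)$ for non-PI noetherian AS-regular algebras of dimension three generated in degree one.
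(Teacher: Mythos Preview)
Your overall strategy---reduce to Theorem~\ref{xxthm4.3} when $\gldim A^{(d)}=\infty$---is the right one and matches the paper. But two of your detours are unnecessary, and one of them contains a gap.

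\textbf{The center bound is immediate from Lemma~\ref{xxlem2.8}.} You identify control of $Z(A^{(d)})$ as the main obstacle and propose to bound it via $Z(A^{(d)})\hookrightarrow Z(Q)$ together with the Artin--Schelter--Tate--Van den Bergh classification. This works, but it is far more than you need. You already argue that $A^{(d)}$ is a finitely generated Ore domain that is not PI; since $\GKdim A^{(d)}=\GKdim A=3$, Lemma~\ref{xxlem2.8} applied \emph{directly to $A^{(d)}$} gives $\GKdim Z(A^{(d)})\le 3-2=1$. No classification is required, and the issue you flag about $Z(A^{(d)})$ possibly exceeding $Z(A)^{(d)}$ is irrelevant to this argument.

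\textbf{The finite global dimension branch never occurs for $d>1$.} You split into the cases $\gldim A^{(d)}<\infty$ (invoke Theorem~\ref{xxthm0.3}) and $\gldim A^{(d)}=\infty$ (invoke Theorem~\ref{xxthm4.3}). The paper instead observes that for $d>1$ the Hilbert series of $A^{(d)}$ cannot have the form $1/f(s)$ for a polynomial $f$, so by \cite[Theorem~2.4]{StZ} one always has $\gldim A^{(d)}=\infty$; together with \cite[Proposition~5.10(3)]{AZ} this shows $A^{(d)}$ is a graded isolated singularity, and Theorem~\ref{xxthm4.3} finishes. (The case $d=1$ is just Theorem~\ref{xxthm0.2}.) Your argument for hypothesis~(b) of Theorem~\ref{xxthm0.3} in the finite-global-dimension branch is where the gap lies: the claim that $A^{(d)}/(t)$ being AS-regular of dimension two ``would contradict the non-PI hypothesis via arguments of the type used in \cite{LMZ}'' is not correct as stated---indeed, in the proof of Theorem~\ref{xxthm0.2} itself (for $d=1$) this exact situation arises and is handled not by contradiction but by an explicit case analysis using Lemma~\ref{xxlem2.1} and Lemma~\ref{xxlem3.6}. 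Fortunately, the Hilbert series argument shows you never enter this branch when $d>1$, so the gap is harmless once you adopt the paper's route.
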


The proofs of Theorems \ref{xxthm0.2} and \ref{xxthm0.3} are
related to the following result that establishes that the center of 
the algebras in these two theorems is either $\Bbbk$ or $\Bbbk[t]$. 

\begin{proposition}
\label{xxpro0.6}
Let $A$ be a noetherian connected graded Auslander regular 
Cohen-Macaulay algebra. If $\GKdim Z(A)\leq 1$, then $Z(A)$ 
is either $\Bbbk$ or $\Bbbk[t]$.
\end{proposition}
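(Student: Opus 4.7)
The plan is to establish that $Z(A)$ is a connected graded commutative domain and then perform a case analysis on $\GKdim Z(A)$. Under the hypotheses on $A$, standard results from \cite{Le, LMZ, St} yield that $A$ is a prime ring (indeed a domain), so $Z(A)$ is a connected graded commutative domain.

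If $\GKdim Z(A) = 0$ then $Z(A)$ is finite-dimensional, and any nonzero element of positive degree in a finite-dimensional connected graded commutative ring is nilpotent, contradicting the domain property; hence $Z(A) = \Bbbk$. Suppose instead $\GKdim Z(A) = 1$. Inverting all nonzero homogeneous elements of $Z(A)$ produces the graded quotient ring $Q^{\gr}(Z(A))$, a commutative graded field of $\GKdim = 1$; any such graded field takes the form $F[u, u^{-1}]$ with $F$ the degree-$0$ subfield and $u$ a homogeneous element of smallest positive degree, and a GK-dimension count forces $F = \Bbbk$. Since $Z(A)$ is connected graded, $Z(A) \subseteq \Bbbk[u]$, and we may write $Z(A) = \bigoplus_{n \in T} \Bbbk u^n$ for a submonoid $T \subseteq \Z_{\geq 0}$ with $\gcd(T) = 1$ (forced by the fact that the graded quotient of $Z(A)$ is all of $\Bbbk[u,u^{-1}]$).

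The remaining step is to show $Z(A)$ is integrally closed in its field of fractions $\Bbbk(u)$: the integral closure in $\Bbbk(u)$ of a graded subring $\bigoplus_{n\in T}\Bbbk u^n$ with $\gcd(T) = 1$ is $\Bbbk[u]$ (since $u$ satisfies the monic equation $x^n - u^n = 0$ with $u^n \in Z(A)$ for any $n \in T$), so normality forces $Z(A) = \Bbbk[u]$. To establish this normality I would invoke that an Auslander regular noetherian connected graded Cohen--Macaulay algebra is a maximal order in its Goldie quotient $Q(A)$ (see \cite{LMZ, St}): given $x \in \Bbbk(u) \subseteq Q(A)$ integral over $Z(A)$, the integrality equation over $Z(A)$ realizes $x$ as integral over $A$, so $x \in A$; and $x$, being a quotient of central elements, is automatically central in $Q(A)$, giving $x \in A \cap Z(Q(A)) = Z(A)$. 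The principal obstacle is establishing or citing this maximal-order property precisely: here Auslander regularity is used essentially, and the Cohen--Macaulay condition alone is not sufficient, as witnessed by $\Bbbk[u^2, u^3]$, which is a connected graded commutative Cohen--Macaulay domain of $\GKdim = 1$ that is not normal.
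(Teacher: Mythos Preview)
Your proof is correct and follows the same route as the paper: invoke Stafford's theorem (Theorem~\ref{xxthm2.3}) that $A$ is a maximal order, deduce that $Z(A)$ is normal (the paper packages this as Lemma~\ref{xxlem2.5}(1) via the conductor formulation of maximal orders, you via an integral-closure argument), and then conclude that a connected graded normal domain of GK-dimension one is $\Bbbk[t]$ (the paper's Lemma~\ref{xxlem2.4}(2) argues via regularity in Krull dimension one, you via an explicit numerical-semigroup embedding in $\Bbbk[u]$). The only cosmetic point is that your step ``$x$ integral over $A$, so $x\in A$'' tacitly uses that $x$ is central in $Q(A)$ so that $A[x]$ is a ring with $cA[x]\subseteq A$ for some nonzero central $c$---which is precisely the paper's conductor argument in Lemma~\ref{xxlem2.5}(1).
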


When $Z(A)=\Bbbk[t]$, we can use Theorem \ref{xxthm1.5} 
which was proved in \cite{LuWZ}. And we have a question 
along this line.

\begin{question}
\label{xxque0.7}
Let $A$ be a noetherian connected graded Auslander regular 
algebra. If $\GKdim Z(A)=2$, what can we say about the 
center $Z(A)$? For example, is $Z(A)$ always noetherian in
this case?
\end{question}

As in the commutative case, it is usually difficult to determine
whether or not an AS-regular algebra is cancellative. For example, 
we are unable to answer the following question.

\begin{question}
\label{xxque0.8}
Let $q\in \Bbbk\setminus\{0,1\}$ be a root of unity. Is the
skew polynomial ring of three variables $\Bbbk_q[x_1,x_2,x_3]$
(or of odd number variables $\Bbbk_q[x_1,x_2,\cdots,x_{2n+3}]$)
cancellative?
\end{question}

When $q=1$ and ${\text{char}}\; \Bbbk=0$, the above question 
is the classical ZCP which has been open for many years 
\cite{Gu3}. Note that if $q=1$ and ${\text{char}}\; \Bbbk>0$,
then $\Bbbk[x_1,\cdots,x_n]$ for $n\geq 3$ is not cancellative
by \cite{Gu1,Gu2}. Question \ref{xxque0.8} is a special case of 
\cite[Question 1.5]{CYZ2} which was stated for a larger class 
of rings, namely, for Veronese subrings of the skew polynomial 
rings. Surprisingly, if $q\in \Bbbk\setminus\{0,1\}$, then the 
skew polynomial ring of even number variables 
$\Bbbk_{q}[x_1,\cdots,x_{2n}]$ is cancellative by
\cite[Theorem 0.8(a)]{BZ1}. 

Several new methods were introduced to deal with the noncommutative
version of the ZCP. For example, methods of discriminants and 
Makar-Limanov invariants were introduced and used in \cite{BZ1}. 
In \cite{LeWZ}, the retractability and detectability were introduced 
to relate the cancellation property. In \cite{LMZ}, Nakayama 
automorphisms were used to show some classes of algebras are 
cancellative. In \cite{LuWZ}, Azumaya locus and 
${\mathcal P}$-discriminant methods were introduced to study the 
cancellation property. One should continue to look for new 
invariants and methods to handle the algebras given in 
Question \ref{xxque0.8}.

The paper is organized as follows. Section 1 contains definitions
and preliminaries. In Section 2, we prove some preliminary results 
necessary for the last section. In Section 3, we present some 
lemmas related to cancellation problem. As an application we 
establish that the universal enveloping algebra of any 3-dimensional 
non-abelian Lie algebra is cancellative [Example \ref{xxex3.10}]. 
Theorems \ref{xxthm0.2}, \ref{xxthm0.3} and Corollary \ref{xxcor0.5} 
are proven in Section 4.

\section{Definitions and Preliminaries}
\label{xxsec1}

We recall some definitions from \cite{BZ1, LeWZ, LuWZ}. Let
$\Bbbk$ be a base field that is algebraically closed. Objects 
in this paper are $\Bbbk$-linear. 

\begin{definition}\cite[Definition 1.1]{BZ1}
\label{xxdef1.1}
Let $A$ be an algebra.
\begin{enumerate}
\item[(1)]
We call $A$ {\it cancellative} if any algebra isomorphism 
$A[t] \cong B[s]$ for some algebra $B$ implies that $A\cong B$.
\item[(2)]
We call $A$ {\it strongly cancellative} if, for each $n \geq 1$, 
any algebra isomorphism
$$A[t_1, \cdots, t_n]\cong B[s_1,\cdots,s_n]$$
for some algebra $B$ implies that $A \cong B$.
\end{enumerate}
\end{definition}

For any algebra $A$, let $M(A)$ denote the category of 
right $A$-modules. 

\begin{definition}\cite[Definition 2.2]{LuWZ}
\label{xxdef1.2}
Let $A$ be an algebra.
\begin{enumerate}
\item[(1)]
We call $A$ {\it m-cancellative} if any equivalence of 
abelian categories $M(A[t]) \cong M(B[s])$ for some 
algebra $B$ implies that $M(A)\cong M(B)$.
\item[(2)]
We call $A$ {\it strongly m-cancellative} if, for each 
$n \geq 1$, any equivalence of abelian categories
$$M(A[t_1, \cdots, t_n])\cong M(B[s_1,\cdots,s_n])$$
for some algebra $B$ implies that $M(A) \cong M(B)$. 
\end{enumerate}
The letter $m$ here stands for the word ``Morita''.
\end{definition}

This Morita version of the cancellation property is a
natural generalization of the original Zariski cancellation 
property when we study noncommutative algebras. 

Let $Z$ be a commutative ring over the base field $\Bbbk$, which 
is usually the center of a noncommutative algebra. We now recall 
the definition of ${\mathcal P}$-discriminant for a property 
${\mathcal P}$. Let $\Spec Z$ denote the prime spectrum of $Z$ 
and 
$$\MaxSpec(Z):=\{\fm \mid \fm {\text{ is a maximal ideal of }} Z\}$$ 
is the maximal spectrum of $Z$. For any $S \subseteq \Spec Z$, 
$I(S)$ is the ideal of $Z$ vanishing on $S$, namely,
$$I(S)=\bigcap_{{\mathfrak p}\in S} {\mathfrak p}.$$ 

For any algebra $A$, $A^{\times}$ denotes the set of invertible 
elements in $A$. A property ${\mathcal P}$ considered in the 
following means a property defined on a class of algebras that 
is an invariant under algebra isomorphisms. 

\begin{definition} \cite[Definition 3.3]{LuWZ}
\label{xxdef1.3} 
Let $A$ be an algebra, $Z:=Z(A)$ be the center of $A$. 
Let ${\mathcal P}$ be a property defined for $\Bbbk$-algebras
{\rm{(}}not necessarily a Morita invariant{\rm{)}}. 
\begin{enumerate}
\item[(1)]
The {\it ${\mathcal P}$-locus} of $A$ is defined to be
$$L_{\mathcal P}(A):=\{ \fm \in \MaxSpec(Z)\mid A/\fm A {\text{ has 
the property }} {\mathcal P}\}.$$
\item[(2)]
The {\it ${\mathcal P}$-discriminant set} of $A$ is defined to be
$$D_{\mathcal P}(A):=\MaxSpec(Z)\setminus L_{\mathcal P}(A).$$
\item[(3)]
The {\it ${\mathcal P}$-discriminant ideal} of $A$ is defined to be
$$I_{\mathcal P}(A):=I(D_{\mathcal P}(A))\subseteq Z.$$
\item[(4)]
If $I_{\mathcal P}(A)$ is a principal ideal of $Z$ generated by $d\in Z$,
then $d$ is called the {\it ${\mathcal P}$-discriminant} of $A$, denoted 
by $d_{\mathcal P}(A)$. In this case $d_{\mathcal P}(A)$ is unique 
up to an element in $Z^{\times}$.
\item[(5)]
Let ${\mathcal C}$ be a class of algebras over $\Bbbk$. We say that 
${\mathcal P}$ is {\it ${\mathcal C}$-stable} if for every algebra $A$ 
in ${\mathcal C}$ and every $n\geq 1$,
$$I_{\mathcal P}(A\otimes \Bbbk[t_1,\cdots,t_n])=I_{\mathcal P}(A)
\otimes \Bbbk[t_1,\cdots, t_n]$$
as an ideal of $Z\otimes \Bbbk[t_1,\cdots,t_n]$. If ${\mathcal C}$ is 
a singleton $\{A\}$, we simply call ${\mathcal P}$ {\it $A$-stable}.
If ${\mathcal C}$ is the whole collection of $\Bbbk$-algebras with the
center affine over $\Bbbk$, we simply call ${\mathcal P}$ {\it stable}.
\end{enumerate}
\end{definition}

In general, neither $L_{\mathcal P}(A)$ nor $D_{\mathcal P}(A)$ 
is a subscheme of $\Spec Z(A)$.

In this paper we will use another property that is closely 
related to the m-cancellative property. 

Recall from the Morita theory that if $A':=A[t_1,\cdots,t_n]$ 
is Morita equivalent to $B':=B[s_1,\cdots,s_n]$, then
there is an $(A',B')$-bimodule $\Omega$ that is invertible and 
induces naturally algebra isomorphisms $A' \cong \End(\Omega_{B'})$ 
and $ (B')^{op} \cong \End(_{A'}\Omega)$ such that 
$$Z(A')\cong \Hom_{(A',B')}(\Omega, \Omega)\cong Z(B').$$ 
The above isomorphism is denoted by
\begin{equation}
\label{E1.3.1}\tag{E1.3.1}
\omega: Z(A')\longrightarrow Z(B').
\end{equation}

The retractable property was introduced in 
\cite[Definitions 2.1 and 2.5]{LeWZ} and the Morita 
$Z$-retractability was introduced in 
\cite[Definition 3.6]{LuWZ}.

\begin{definition}
\label{xxdef1.4}
Let $A$ be an algebra.
\begin{enumerate}
\item[(1)] %\cite[Definition 2.5(1)]{LeWZ}
We call $A$ {\it $Z$-retractable} 
if, for any algebra $B$, any algebra isomorphism $\phi: A[t] 
\cong B[s]$ implies that $\phi(Z(A))=Z(B)$. If further $\phi(A)=B$, we just say $A$ is {\it retractable}.
\item[(2)] %\cite[Definition 2.5(2)]{LeWZ}
We call $A$ {\it strongly $Z$-retractable} if, for any algebra $B$ 
and integer $n\geq 1$, any algebra isomorphism 
$\phi: A[t_1,\dots, t_n] \cong B[s_1,\dots, s_n]$ 
implies that $\phi(Z(A))=Z(B)$. If further $\phi(A)=B$, we just say $A$ is {\it strongly retractable}.
\item[(3)] %\cite[Definition 2.6(3)]{LuWZ}
We call $A$ {\it m-$Z$-retractable} if, for any algebra $B$, any
equivalence of categories $M(A[t])\cong M(B[s])$ implies that 
$\omega(Z(A))=Z(B)$ where $\omega: Z(A)[t]\to Z(B)[s]$ is given as in 
\eqref{E1.3.1}.
\item[(4)] %\cite[Definition 2.6(4)]{LuWZ}
We call $A$ {\it strongly m-$Z$-retractable} if, for any algebra $B$ and 
any $n\geq 1$, any equivalence of categories 
$M(A[t_1,\cdots, t_n])\cong M(B[s_1,\cdots,s_n])$ 
implies that $\omega(Z(A))=Z(B)$ where 
$$\omega: Z(A)[t_1,\cdots,t_n]\to 
Z(B)[s_1,\cdots,s_n]$$ is given as in \eqref{E1.3.1}.
\end{enumerate}
\end{definition}

The following theorem was proved in \cite[Corollary 3.11 and 
Lemma 4.4]{LuWZ} and will be used several times in later 
sections.

\begin{theorem}
\label{xxthm1.5} 
Let $A$ be a noetherian algebra such that its center $Z(A)$ is 
$\Bbbk[x]$. Let ${\mathcal P}$ be a Morita invariant property 
{\rm{(}}respectively, stable property{\rm{)}} such that the 
${\mathcal P}$-discriminant of $A$, denoted by $d$, is a 
nonzero non-invertible element in $Z(A)$. Then $A$ is strongly 
m-$Z$-retractable {\rm{(}}respectively, strongly $Z$-retractable{\rm{)}}
and strongly m-cancellative {\rm{(}}respectively, strongly 
cancellative{\rm{)}}.
\end{theorem}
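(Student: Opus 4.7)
My plan is to establish the strong m-$Z$-retractability (respectively, strong $Z$-retractability) first via a $\mathcal P$-discriminant comparison, and then deduce the cancellation statement. Given either an equivalence of module categories between $A' := A[t_1,\dots,t_n]$ and $B' := B[s_1,\dots,s_n]$ or an algebra isomorphism $\phi$ between them, the induced isomorphism of centers $\omega : Z(A') \to Z(B')$ from \eqref{E1.3.1} (which is just $\phi|_{Z(A')}$ in the iso case) identifies $\Bbbk[x,t_1,\dots,t_n] = Z(A)[\underline t]$ with $Z(B)[\underline s]$. Since the left-hand side is a polynomial ring in $n+1$ variables, the Abhyankar--Eakin--Heinzer cancellation theorem (applied inductively) forces $Z(B) \cong \Bbbk[y]$ for some $y$.

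Next I would exploit the $\mathcal P$-discriminant. By $\mathcal P$-stability, $I_{\mathcal P}(A') = (d)\cdot \Bbbk[x,\underline t]$ inside $Z(A')$ and $I_{\mathcal P}(B') = I_{\mathcal P}(B) \cdot \Bbbk[y,\underline s]$ inside $Z(B')$. The discriminant ideal is preserved under the isomorphism (respectively, Morita equivalence), so $\omega(d)$ and $d_{\mathcal P}(B)$ differ by a unit in $\Bbbk[y,\underline s]$; but units in a polynomial ring lie in $\Bbbk^\times$, so $\omega(d) \in \Bbbk[y]$ up to a scalar. Writing $d = \sum c_i x^i$ with positive $x$-degree (guaranteed because $d$ is nonzero and non-invertible in $\Bbbk[x]$), the relation $\sum c_i \omega(x)^i = \omega(d) \in \Bbbk[y]$ exhibits $\omega(x)$ as algebraic over $\Bbbk(y)$ inside the fraction field $\Bbbk(y,\underline s)$. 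Since $\Bbbk(y,\underline s)$ is purely transcendental over $\Bbbk(y)$ and a base field is algebraically closed in any purely transcendental extension, we get $\omega(x) \in \Bbbk(y) \cap \Bbbk[y,\underline s] = \Bbbk[y]$. The symmetric argument applied to $\omega^{-1}$ yields $\omega(Z(A)) = Z(B)$, which is the asserted strong (m-)$Z$-retractability.

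For the cancellation conclusion, I would use the retractability just proved to identify $Z(A)$ and $Z(B)$ with a common polynomial ring $\Bbbk[x]$ so that $\omega|_{Z(A)}$ becomes the identity; the given equivalence (respectively, isomorphism) then becomes $\Bbbk[x]$-linear, and specializing at each maximal ideal $(x-\lambda)$ in the $\mathcal P$-regular locus $\{d \ne 0\}$ produces matching $\mathcal P$-regular fibers $A/\fm A$ and $B/\fm B$. The main obstacle I anticipate is promoting this fiberwise/central comparison into a global isomorphism (respectively, Morita equivalence) $A \cong B$, without any a priori identification of $A$ inside $A[\underline t]$; this step amounts to tracking an invertible bimodule together with the $\Bbbk[x]$-module structure on it, essentially mirroring the argument of \cite[Lemma 4.4]{LuWZ}, and is substantially more delicate than the retractability half.
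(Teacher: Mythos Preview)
Your retractability argument is correct and gives a direct alternative to the paper's route. The paper defers entirely to \cite{LuWZ}: first \cite[Lemma~6.1]{LuWZ} shows that a Morita invariant property is automatically stable when $\Bbbk$ is algebraically closed (a step you invoke without comment in the Morita case), and then \cite[Corollary~3.11]{LuWZ} passes through the $\mathrm{LND}^H$-rigidity formalism---the nonzero non-unit $d\in\Bbbk[x]$ is ``effective'', so $Z(A)$ is strongly $\mathrm{LND}^H_d$-rigid, whence $A$ is strongly (m-)$Z$-retractable. Your argument sidesteps all of this: from $\omega(d)\in\Bbbk[y]$ you deduce that $\omega(x)$ is algebraic over $\Bbbk(y)$ and therefore lies in $\Bbbk(y)\cap\Bbbk[y,\underline{s}]=\Bbbk[y]$. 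This is more elementary and self-contained, though tailored to the case $Z(A)=\Bbbk[x]$, whereas the $\mathrm{LND}^H$ framework is set up for more general centers.

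Your plan for the cancellation step, however, is misdirected. Specializing at maximal ideals $(x-\lambda)$ yields at best $(A/\fm A)[\underline t]\cong (B/\fm B)[\underline s]$ for each such $\fm$, and there is no mechanism to reassemble these into $A\cong B$; the $\mathcal{P}$-locus plays no further role once retractability holds. The passage from $Z$-retractability to cancellation (the content of \cite[Lemma~4.4]{LuWZ}) is in fact much simpler than you anticipate and needs no fiberwise patching: with $\omega(Z(A))=Z(B)$, set $J:=\omega^{-1}\bigl((s_1,\dots,s_n)\bigr)\subseteq Z(A')=\Bbbk[x,\underline t]$; the inclusion $\Bbbk[x]\hookrightarrow\Bbbk[x,\underline t]/J$ is then an isomorphism (injective since $J\cap\Bbbk[x]=\omega^{-1}\bigl((\underline s)\cap\Bbbk[y]\bigr)=0$, surjective since $\omega$ carries $\Bbbk[x]$ onto $\Bbbk[y]=\Bbbk[y,\underline s]/(\underline s)$), so $A[\underline t]/JA[\underline t]\cong A\otimes_{\Bbbk[x]}\bigl(\Bbbk[x,\underline t]/J\bigr)\cong A$. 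But this quotient is also isomorphic (respectively Morita equivalent, via $\Omega/J\Omega$) to $B[\underline s]/(\underline s)=B$, and you are done.
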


\begin{proof} By \cite[Lemma 6.1]{LuWZ}, ${\mathcal P}$ is 
stable (when $\Bbbk$ is algebraically closed). The assertion 
follows from \cite[Corollary 3.11]{LuWZ} and 
\cite[Lemma 4.4]{LuWZ}.
\end{proof}

\section{Results not involving cancellation properties}
\label{xxsec2} 

In this section we collect some results that do not directly 
involve cancellation properties, but are needed in later 
sections. In the next section, we collect some lemmas that 
are directly related to cancellation properties. 

\begin{lemma}
\label{xxlem2.1} Let $\Bbbk$ be a field of characteristic zero and 
$q\neq 1$ be a nonzero scalar in $\Bbbk$. 
The following hold.
\begin{enumerate}
\item[(1)] Algebras 
$\Bbbk\langle x,y\rangle/(xy-yx)$, 
$\Bbbk\langle x,y\rangle/(xy-yx-1)$ and 
$\Bbbk\langle x,y\rangle/(xy-yx-x)$ are pairwise
not Morita equivalent.
\item[(2)]
If $q$ is not a root of unity, 
$\Bbbk\langle x,y\rangle/(xy-qyx)$ is not Morita equivalent to 
$\Bbbk\langle x,y\rangle/(xy-qyx-1)$.
\item[(3)]
The Jordan plane {\rm{(}}algebra{\rm{)}}
$\Bbbk\langle x,y\rangle/(xy-yx+x^2)$ is not Morita equivalent to 
$\Bbbk\langle x,y\rangle/(xy-yx+x^2-1)$.
\item[(4)]
The Jordan plane {\rm{(}}algebra{\rm{)}}
$\Bbbk\langle x,y\rangle/(xy-yx+x^2)$ is not Morita equivalent to 
$\Bbbk\langle x,y\rangle/(xy-yx+x^2-x)$.
\item[(5)]
The Jordan plane {\rm{(}}algebra{\rm{)}}
$\Bbbk\langle x,y\rangle/(xy-yx+x^2)$ is not Morita equivalent to 
$\Bbbk\langle x,y\rangle/(xy-yx+x^2-y)$.
\end{enumerate}
\end{lemma}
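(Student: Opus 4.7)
The proofs of all five items rely on Morita invariants. Recall that if algebras $A$ and $B$ are Morita equivalent, then $Z(A)\cong Z(B)$ as rings and there is a lattice isomorphism between their two-sided ideals which preserves products, intersections, and sums. In particular, the prime ideal spectrum (with its height function) is preserved, as is the property of being simple.

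For part~(1), I would first separate $\Bbbk[x,y]$ from the other two using the centers: $Z(\Bbbk[x,y])=\Bbbk[x,y]$ has Gelfand--Kirillov dimension two, while $A_1:=\Bbbk\langle x,y\rangle/(xy-yx-1)$ and $U(\mathfrak{b}):=\Bbbk\langle x,y\rangle/(xy-yx-x)$ both have center $\Bbbk$ in characteristic zero. To distinguish $A_1$ from $U(\mathfrak{b})$, note that $A_1$ is the classical simple Weyl algebra, while in $U(\mathfrak{b})$ the element $x$ is normal (since $yx=x(y-1)$), so $(x)$ is a proper two-sided ideal with commutative quotient $U(\mathfrak{b})/(x)\cong\Bbbk[y]$. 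For part~(2), the same simplicity argument applies: the quantum Weyl algebra $\Bbbk\langle x,y\rangle/(xy-qyx-1)$ is simple for $q$ not a root of unity (a standard result), whereas the quantum plane $\Bbbk\langle x,y\rangle/(xy-qyx)$ has the proper ideal $(x)$ with commutative quotient $\Bbbk[y]$.

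For parts~(3) and~(4), I would present each algebra as an Ore extension $\Bbbk[x][y;\delta]$ by a $\Bbbk$-linear derivation: $\delta(x)=x^2$ for the Jordan plane $J$, $\delta(x)=x^2-1$ for the algebra in (3), and $\delta(x)=x-x^2$ for the algebra in (4). The height-one primes of such an Ore extension correspond bijectively to the nonzero $\delta$-primes of $\Bbbk[x]$, i.e.\ the primes $(x-a)$ with $\delta(x)|_{x=a}=0$ (using the fact that $\Bbbk(x)[y;\delta]$ is simple whenever $\delta\neq 0$, so the generic fibre contributes no height-one prime). Direct inspection yields: a single prime $(x)$ for $J$; two primes $(x-1),(x+1)$ for (3); and two primes $(x),(x-1)$ for (4). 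Since the number of height-one primes is a Morita invariant, $J$ is distinguished from each of these algebras.

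For part~(5), the substitution $z:=y-x^2$ sends $xy-yx+x^2-y$ to $xz-zx-z$, so $\Bbbk\langle x,y\rangle/(xy-yx+x^2-y)\cong U(\mathfrak{b})$. Both $J$ and $U(\mathfrak{b})$ have trivial center and a unique height-one prime (namely $(x)$ and $(z)$, each normal), so the simpler invariants all coincide. Instead, a hypothetical Morita equivalence $J\simeq U(\mathfrak{b})$ would, via the product-preserving lattice isomorphism of ideals, send $(x)$ to $(z)$ and hence $(x^2)=(x)^2$ to $(z^2)=(z)^2$, inducing a Morita equivalence $J/(x^2)\simeq U(\mathfrak{b})/(z^2)$. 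But in $J/(x^2)$ the relation $xy-yx=-x^2$ collapses to $xy=yx$, so $J/(x^2)\cong\Bbbk[x,y]/(x^2)$ is commutative with infinite-dimensional center. A direct PBW computation shows $Z(U(\mathfrak{b})/(z^2))=\Bbbk$: any element $p(x)+q(x)z$ commuting with $x$ forces $q(x)=0$, and commuting with $z$ then forces $p(x)=p(x-1)$, hence $p$ is scalar. The centers cannot be isomorphic, yielding the contradiction. The main obstacle is part~(5), where one must pass to this ``second-order'' invariant, the quotient by the square of the unique height-one prime, to separate $J$ from $U(\mathfrak{b})$.
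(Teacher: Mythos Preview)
Your arguments for parts (1), (3), (4), and (5) are correct and essentially match the paper's. In (1) you swap the roles of the two invariants (the paper uses global dimension to isolate $A_1$ and centers to separate the remaining pair, while you use centers to isolate $\Bbbk[x,y]$ and simplicity for the remaining pair), but this is a harmless rearrangement. Parts (3)--(4) are the same height-one-prime count as in the paper, and your treatment of (5) is exactly the paper's: reduce to $U(\mathfrak b)$ by the substitution $y\mapsto y-x^2$, then compare the centers of the quotients by the square of the unique height-one prime.

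Part (2), however, contains a genuine error. The quantum Weyl algebra $B=\Bbbk\langle x,y\rangle/(xy-qyx-1)$ is \emph{not} simple when $q\neq 1$: the element $u:=xy-yx=(q-1)yx+1$ is normal (one checks $xu=qux$ and $uy=qyu$), so $(u)$ is a proper nonzero two-sided ideal, with $B/(u)\cong\Bbbk[x^{\pm 1}]$. Thus simplicity cannot distinguish $B$ from the quantum plane $A=\Bbbk_q[x,y]$; both have nontrivial ideal lattices. The paper instead compares the \emph{shapes} of the quotients at height-one primes: $B$ has a height-one prime with quotient $\Bbbk[x^{\pm 1}]$, whereas the only height-one primes of $A$ (for $q$ not a root of unity) are $(x)$ and $(y)$, each with quotient $\Bbbk[t]$, and no matrix ring over $\Bbbk[x^{\pm 1}]$ is isomorphic to $\Bbbk[t]$. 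You will need an argument of this type; the ``$B$ is simple'' shortcut is not available.
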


\begin{proof} (1) First of all $\Bbbk\langle x,y\rangle/(xy-yx)$
and $\Bbbk\langle x,y\rangle/(xy-yx-x)$ have global dimension two 
while $\Bbbk\langle x,y\rangle/(xy-yx-1)$ has global dimension one.
So either the algebra $\Bbbk\langle x,y\rangle/(xy-yx)$ or the 
algebra $\Bbbk\langle x,y\rangle/(xy-yx-x)$ is not Morita equivalent 
to $\Bbbk\langle x,y\rangle/(xy-yx-1)$. Second, the centers of 
$\Bbbk\langle x,y\rangle/(xy-yx)$ and $\Bbbk\langle x,y\rangle/(xy-yx-x)$
are non-isomorphic, so these algebras are not Morita equivalent.

(2) Let $A:=\Bbbk\langle x,y\rangle/(xy-qyx)$ and 
$B:=\Bbbk\langle x,y\rangle/(xy-qyx-1)$. Suppose on the contrary
that $A$ is Morita equivalent to $B$. Let $J$ be the height 
one prime ideal of $B$ generated by $(1-q)xy-1$ such that 
$B/J=\Bbbk[x^{\pm 1}]$ (with the image of $y$ being $(1-q)^{-1}x^{-1}$). 
Since $A$ is Morita 
equivalent to $B$, there is an ideal $I$ of $A$ such that
$A/I$ is Morita equivalent to $\Bbbk[x^{\pm 1}]$. Since
every projective module over $\Bbbk[x^{\pm 1}]$ is free,
$A/I$ is a matrix algebra over $\Bbbk[x^{\pm 1}]$. 
When $q$ is not a root of unity, the only height one prime 
ideals $I$ of $A$ are $(x)$ or $(y)$ \cite[Example II.1.2]{BG}. In both
cases, $A/I$ is isomorphic to $\Bbbk[t]$, which is not
a matrix algebra over $\Bbbk[x^{\pm 1}]$. This yields a 
contradiction and therefore $A$ is not Morita equivalent to 
$B$. 

(3) Let $A:=\Bbbk\langle x,y\rangle/(xy-yx+x^2)$ and 
$B:=\Bbbk\langle x,y\rangle/(xy-yx+x^2-1)$ by recycling 
notation from the proof of part (2) and suppose on the contrary
that $A$ is Morita equivalent to $B$. 
Let $J_{\pm}$ be the height one prime ideals of $B$ generated 
by $(xy-yx, x\pm 1)$. Since $A$ is Morita equivalent to $B$, 
there is an ideal $I_{\pm}$ of $A$ such that $A/I_{\pm}$ is 
Morita equivalent to $B/J_{\pm}$. Since ${\text{char}}\; \Bbbk=0$, 
$A$ has only a single height one prime that is $(x)$ \cite[Theorem 2.4]{Sh}. 
This yields a contradiction. Therefore $A$ is not Morita 
equivalent to $B$.

(4) The assertion follows from part (3) because 
$\Bbbk\langle x,y\rangle/(xy-yx+x^2-1)\cong 
\Bbbk\langle x,y\rangle/(xy-yx+x^2-x)$. 

(5) Let $A:=\Bbbk\langle x,y\rangle/(xy-yx+x^2)$ and 
$B:=\Bbbk\langle x,y\rangle/(xy-yx+x^2-y)$ by recycling 
notation from the proof of part (2) and suppose on the contrary
that $A$ is Morita equivalent to $B$. Let $y'=y-x^2$. Then 
the relation in $B$ becomes $xy'-y'x-y'=0$. Exchanging 
$x$ and $y'$, one sees that $B$ is isomorphic to 
$\Bbbk\langle x,y\rangle/(xy-yx+x)$. Let $I$ be the 
unique height one prime ideal of $B$ generated by $x$. 
Then $B/I\cong \Bbbk[y]$. Since $A$ and $B$ 
are Morita equivalent, there is a height one prime
$J$ of $A$. Since the only height one prime of $A$ is 
$(x)$. Let $J=(x)$, then $I^2$ corresponds to $J^2$. 
This implies that $B/I^2\cong \Bbbk\langle x,y\rangle/(xy-yx+x,x^2)$
is Morita equivalent to $A/J^2\cong \Bbbk[x,y]/(x^2)$. 
Since the center is preserved by Morita equivalence,
$$\Bbbk[x,y]/(x^2)\cong Z(\Bbbk\langle x,y\rangle/(xy-yx+x,x^2))
\cong \Bbbk$$
yielding a contradiction. Therefore $A$ and $B$ are 
not Morita equivalent.
\end{proof}

We thank one of the referees for the following remark.

\begin{remark}[Referee]
\label{xxrem2.2}
With only a little more work, one can show the following Claim:
Let $A$ and $B$ be two filtered AS-regular algebras of global 
dimension 2 that are not PI. Then $A$ and $B$ are Morita 
equivalent if and only if they are isomorphic.

\smallskip

\noindent
{\it Sketch proof of the claim:}
Such algebras all have the form $\Bbbk\langle x,y\rangle/(f)$
where $f$ is a (not necessarily homogeneous) polynomial of 
degree 2. By \cite[Corollary 2.12]{Ga2}, up to isomorphism, 
$f$ is one of
\begin{enumerate}
\item[(a)] 
$xy - yx - 1$,
\item[(b)] 
$xy -yx - x$,
\item[(c)] 
$xy - yx + x^2$,
\item[(d)] 
$xy - yx + x^2 + 1$,
\item[(e)] 
$xy- q yx$ for $q\neq 1$ a nonroot of unity (the algebra is denoted by
${\mathcal O}_q(\Bbbk^2)$), or
\item[(f)] 
$xy - q yx - 1$ 
for $q\neq 1$ a nonroot of unity.
\end{enumerate}

By \cite[Lemma 3.1.1]{RS}, if two noetherian domains
$A$ and $B$ are Morita equivalent, then the rings of fractions
$Q(A)$ and $Q(B)$ are isomorphic.

Note that the algebra corresponding to (a) is simple, consequently,
it is not Morita equivalent to any other algebras that are not 
simple (in cases (b)-(f)). In cases (b)-(d) the corresponding 
algebras all have $Q(A) \cong D_1(\Bbbk)$, the Weyl division ring. 
By Lemma \ref{xxlem2.1}, for the most part, these algebras 
are pairwise non-Morita equivalent. The proof of (b) not being 
Morita equivalent to (d) is similar to the proof of 
Lemma \ref{xxlem2.1}(3).

On the other hand, for cases (e) and (f), $Q(A)\cong D^q_1(\Bbbk)$,
the skew Weyl division ring. By \cite[Corollaire 3.11(c)]{AF},
$D_1^q(\Bbbk) \cong D^{q'}_1(\Bbbk)$ if and only if $q'=q^{\pm 1}$.
By Lemma \ref{xxlem2.1}(2), algebras in cases (e) and (f) are Morita 
distinct from each other. Since ${\mathcal O}_q(\Bbbk^2) \cong 
{\mathcal O}_{q^{-1}}(\Bbbk^2)$ in the case (e) (and similarly for 
the quantum Weyl algebras in case (f)), this completes the proof.
\end{remark}

Next we prove Proposition \ref{xxpro0.6}. To save some space, we refer the reader to
\cite{Le, St} for the definitions of Auslander regularity and 
Cohen-Macaulay property. A nice result of \cite[Corollary 6.2]{Le} 
is that every AS-regular algebra of global dimension three is 
Auslander regular and Cohen-Macaulay. A ring $A$ is called {\it 
stably free} if, for every finitely generated projective $A$-module
$P$, there exist integers $n$ and $m$ such that $P\oplus A^{\oplus n} 
\cong A^{\oplus m}$. Connected graded algebras are automatically 
stably free \cite{St}. An Ore domain $A$ is called a {\it maximal order} 
if $A\subseteq B$ inside the quotient division ring $Q(A)$ of 
$A$ for some ring $B$ with the property that $aBb\subseteq A$, 
for some $a,b\in A\setminus \{0\}$, then $A=B$. The main result 
of \cite{St} is

\begin{theorem} \cite[Theorem]{St}
\label{xxthm2.3}
Let $A$ be a noetherian algebra that is Auslander regular, 
Cohen-Macaulay and stably free. Then, $A$ is a domain and a maximal
order in its quotient division ring $Q(A)$.
\end{theorem}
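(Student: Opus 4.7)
The strategy is to exploit the grade function
\[
j(M) := \min\{i \geq 0 : \Ext^i_A(M,A) \neq 0\}
\]
provided by Auslander regularity, together with the Cohen--Macaulay identity $j(M) + \GKdim M = \GKdim A$. These tools endow $A$ with a dimension theory on finitely generated modules that closely parallels the one available over a commutative regular local ring, and essentially every step of the proof amounts to a careful grade estimate.

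First I would show that $A$ is prime. The finite global dimension together with the grade machinery rules out nonzero nilpotent ideals, so $A$ is semiprime. If $A$ were a nontrivial direct product of rings, the summands would be finitely generated projective $A$-modules whose classes in $K_0(A)$ would need to agree with the nonnegative integer ranks forced by the stably free hypothesis; this rules out nontrivial central idempotents and, combined with noetherianness, forces $A$ to be prime. Goldie's theorem then produces a simple Artinian classical quotient $Q(A) \cong M_n(D)$, and one more rank argument — lifting a primitive idempotent of $Q(A)$ to a projective right ideal of $A$ and noting that its rank must be a positive integer — forces $n = 1$. Hence $A$ is a domain and $Q(A) = D$ is a division ring.

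For the maximal order statement, suppose $B$ is a subring of $Q(A)$ containing $A$ with $aBb \subseteq A$ for some nonzero $a,b \in A$. Then $B$ is a noetherian $A$-bimodule and the cokernel $M := B/A$ is annihilated by nonzero elements of $A$ on each side. The crux is to show that $M = 0$. Applying $\Ext^{\bullet}_A(-,A)$ to the short exact sequence $0 \to A \to B \to M \to 0$, the Cohen--Macaulay equality forces $j(M) \geq 1$ and $\GKdim M = \GKdim A - j(M)$, while the Auslander condition controls the grades of submodules of $\Ext^{j(M)}_A(M,A)$. Iterating through a pure filtration — every nonzero finitely generated module over an Auslander regular Cohen--Macaulay ring admits a pure submodule of the same grade — and exploiting that $A$ is reflexive as a module over itself, one extracts a contradiction with $B \supsetneq A$, whence $B = A$ and $A$ is a maximal order in $Q(A)$.

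The main obstacle is the maximal order step. The domain conclusion reduces, once primeness is established, to a fairly concrete rank count in $K_0(A)$. The maximal order argument, however, genuinely requires the full strength of Auslander regularity, since one must navigate the duality induced by $\Ext^d_A(-, A)$ between left and right modules and track how grade and purity interact with the bimodule structure of $B$. The delicate point is to ensure that intermediate Ext groups do not hide torsion of the wrong grade; the Auslander condition is precisely the hypothesis that bounds grades of submodules uniformly and closes this gap.
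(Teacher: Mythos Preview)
The paper does not prove this theorem; it is simply quoted from Stafford's paper \cite{St} as a known result and used as a black box (its only application in the paper is in Lemma~\ref{xxlem2.5}, to conclude that the center of such an $A$ is a maximal order). There is therefore no proof in the paper to compare your proposal against.

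For what it is worth, your outline does track the broad shape of Stafford's original argument: grade and Cohen--Macaulay machinery to obtain primeness, a $K_0$/rank count exploiting the stably free hypothesis to force the Goldie quotient ring to be a division ring, and grade estimates on an overring $B\supseteq A$ to establish the maximal order property. However, several of your steps are under-specified. Your argument ruling out nontrivial central idempotents is not quite right as stated: stably free gives an integer-valued rank only after one has a well-defined rank homomorphism $K_0(A)\to\mathbb{Z}$, which already presupposes some control over the Goldie quotient, so the logical order needs care. More seriously, the maximal order paragraph is essentially a placeholder: ``iterating through a pure filtration \dots\ one extracts a contradiction'' hides the entire content of the argument. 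In Stafford's treatment this step goes through a genuine analysis of reflexive ideals and the behaviour of grade at height-one primes, and it is not something one can wave through. If you want a self-contained proof, that is the part requiring real work.
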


\begin{lemma}
\label{xxlem2.4} 
Let $Z$ be a connected graded domain of GK dimension 
one. 
\begin{enumerate}
\item[(1)]
It is noetherian and finitely generated over $\Bbbk$.
\item[(2)]
If $Z$ is normal, then $Z$ is isomorphic to $\Bbbk[t]$.
\end{enumerate}
\end{lemma}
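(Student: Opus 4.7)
The plan is to show that $Z$ embeds as a connected graded subring of a polynomial ring $\Bbbk[y]$ in one variable, from which both parts will follow via the elementary theory of numerical semigroup algebras.

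For part (1), I would first verify that $\dim_\Bbbk Z_n$ is uniformly bounded. Since $Z$ is a domain, left multiplication by any nonzero homogeneous element of degree $d$ injects $Z_n \hookrightarrow Z_{n+d}$, so $\dim Z_n$ is nondecreasing along each residue class modulo $d$; combined with the linear growth of $\sum_{k \leq n} \dim Z_k$ forced by $\GKdim Z \leq 1$, this yields $\dim Z_n \leq M$ for some constant $M$. A routine dimension count then shows that nonzero homogeneous elements of $Z$ satisfy the Ore conditions, so the graded quotient ring $Q_{gr}(Z)$ exists as a graded division ring. The key step is to show $(Q_{gr}(Z))_0 = \Bbbk$: if $t \in (Q_{gr}(Z))_0 \setminus \Bbbk$, then $t$ is transcendental over $\Bbbk$ by algebraic closure, and using Ore one can find a single $B \in Z \setminus \{0\}$ with $t^i B \in Z$ for $i = 0, 1, \dots, N$; these $N+1$ elements all lie in the single graded piece $Z_{\deg B}$ and are $\Bbbk$-linearly independent (since $Q_{gr}(Z)$ is a domain and $1, t, \dots, t^N$ are linearly independent over $\Bbbk$), contradicting the bound $\dim Z_{\deg B} \leq M$ for large $N$. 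Hence $Q_{gr}(Z) \cong \Bbbk[y, y^{-1}]$ as a graded ring with $y$ homogeneous of some positive degree $e$, and connectedness of $Z$ gives $Z \subseteq \Bbbk[y]$. In particular $Z$ is commutative with $\dim Z_n \leq 1$, so $Z \cong \Bbbk[S]$ for some submonoid $S \subseteq \mathbb{N}$; since every submonoid of $\mathbb{N}$ is finitely generated, $Z$ is a finitely generated commutative $\Bbbk$-algebra and hence noetherian.

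For part (2), assume further that $Z$ is normal. By part (1) we have $Z \cong \Bbbk[S] \subseteq \Bbbk[y]$ with $e = \gcd(S)$, and $y^e$ is integral over $Z$ because $y^{ne} \in Z$ for all sufficiently large $n$, so $y^e$ satisfies the monic polynomial $T^n - y^{ne} \in Z[T]$. Normality forces $y^e \in Z$, hence $e \in S$ and therefore $S = e\mathbb{N}$, giving $Z = \Bbbk[y^e] \cong \Bbbk[t]$. The main obstacle is the step establishing $(Q_{gr}(Z))_0 = \Bbbk$ in the possibly noncommutative setting, which requires the careful Ore-based clearing of denominators described above; once that is in place the remainder of the argument is a routine verification about submonoid subalgebras of $\Bbbk[y]$.
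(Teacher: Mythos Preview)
Your proof is correct and reaches the same key fact as the paper---that $Z$ embeds as a graded subring of $\Bbbk[t]$---but by a genuinely different route. The paper disposes of part~(1) in one line by first invoking the (nontrivial) result that every domain of GK~dimension one is commutative; once $Z$ is commutative, the embedding $Z\subseteq\Bbbk[t]$ is immediate from the structure of graded fraction fields over an algebraically closed $\Bbbk$, and finite generation and noetherianity follow. You instead work directly in the possibly noncommutative setting, building the graded Ore quotient $Q_{gr}(Z)$ and proving $(Q_{gr}(Z))_0=\Bbbk$ by a transcendence and denominator-clearing argument; this is more self-contained but also more laborious. For part~(2) the paper argues abstractly---a noetherian normal domain of Krull dimension~$\leq 1$ is regular, and a connected graded regular ring of global dimension one is $\Bbbk[t]$---whereas your integrality argument for $y^e$ is a concrete computation inside the numerical semigroup algebra. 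Both are short and valid.

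One point deserves care: your assertion that ``$\GKdim Z\leq 1$ forces linear growth of $\sum_{k\leq n}\dim Z_k$'' is not automatic before $Z$ is known to be finitely generated, which is precisely what you are proving. The clean fix is to pass to a finitely generated graded subdomain: if $\dim Z_m\geq 2$, pick linearly independent $a,b\in Z_m$ and apply the standard domain inequality $\dim(UV)\geq\dim U+\dim V-1$ inside $\Bbbk\langle a,b\rangle$ to obtain $\dim(\Bbbk\langle a,b\rangle)_{km}\geq k+1$, hence $\GKdim Z\geq 2$, a contradiction. This yields $\dim Z_n\leq 1$ for all $n$ directly (so commutativity and the embedding into $\Bbbk[y]$ are then trivial), and in fact renders the Ore localization and denominator-clearing steps unnecessary.
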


\begin{proof} Note that every domain of GK dimension one is commutative.

(1) Since $Z$ is connected graded and $\Bbbk$ is algebraically 
closed, $Z$ is a subring of $\Bbbk[t]$ where $\deg t=1$. From this, 
it is easy to see that $Z$ is finitely generated and noetherian.

(2) First of all, $\Kdim Z=\GKdim Z=1$. By part (1), $Z$ is 
noetherian. Every noetherian normal domain $Z$ of Krull 
dimension one or zero is regular (namely, has finite global 
dimension). So $Z$ is regular of global dimension no more than 
one. Since $Z$ is connected graded, its graded maximal ideal is 
principal, which implies that $Z\cong \Bbbk[t]$.
\end{proof}

Note that a noetherian commutative maximal order is a normal
domain.

\begin{lemma}
\label{xxlem2.5} 
Let $A$ be a domain that is a maximal order. 
\begin{enumerate}
\item[(1)] 
Its center $Z(A)$ is a maximal order in the field of fractions 
$Q(Z(A))$. 
\item[(2)]
If $A$ is connected graded and $\GKdim Z(A)\leq 1$, then $Z(A)$ 
is either $\Bbbk$ or $\Bbbk[t]$.
\end{enumerate}
\end{lemma}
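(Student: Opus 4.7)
The plan is to prove (1) by reducing the question about $Z(A)$ being a maximal order to the known maximal-order property of $A$ itself, and then to derive (2) as a straightforward combination of (1), the remark preceding it, and Lemma \ref{xxlem2.4}.

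For part (1), I would take a candidate overring $C$ of $Z(A)$ inside $Q(Z(A))$ with $aCb \subseteq Z(A)$ for some nonzero $a,b \in Z(A)$, and show $C = Z(A)$. The key observation is that central elements are Ore in $A$, so $Q(Z(A))$ embeds naturally in $Q(A)$, and we may therefore form $AC$ inside $Q(A)$. Then
\[
a(AC)b \;=\; A(aCb) \;\subseteq\; A\cdot Z(A) \;\subseteq\; A,
\]
so the maximal-order property of $A$ forces $AC = A$; in particular $C \subseteq A$. It remains to check $C \subseteq Z(A)$, and this is automatic: any $c \in C \subseteq Q(Z(A))$ can be written $c = z_1 z_2^{-1}$ with $z_i \in Z(A)$, and because $z_2$ is central in $A$, its inverse in $Q(A)$ is also central, so $c$ commutes with every element of $A$. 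Thus $c \in A \cap Z(Q(A)) = Z(A)$.

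For part (2), when $\GKdim Z(A) = 0$ the connected graded domain $Z(A)$ is just $\Bbbk$. When $\GKdim Z(A) = 1$, Lemma \ref{xxlem2.4}(1) gives that $Z(A)$ is noetherian (and commutative), part (1) shows $Z(A)$ is a maximal order in its field of fractions, and then the remark preceding the statement — that a noetherian commutative maximal order is a normal domain — tells us $Z(A)$ is normal. Lemma \ref{xxlem2.4}(2) then yields $Z(A) \cong \Bbbk[t]$.

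The only slightly subtle point is the embedding $Q(Z(A)) \hookrightarrow Q(A)$ used in part (1); this is legitimate because nonzero central elements of a domain form an Ore set, so their localization sits inside the full Ore localization $Q(A)$. Beyond that, the argument is essentially formal, and I do not anticipate a real obstacle — the main conceptual content is simply the observation that multiplying a candidate central overring by $A$ lets us transport the maximal-order hypothesis from $A$ down to $Z(A)$.
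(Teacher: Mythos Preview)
Your proposal is correct and follows essentially the same argument as the paper: form the overring $AC$ (the paper writes $AB$) inside $Q(A)$, use centrality of $a,b$ to get $a(AC)b\subseteq A$, apply the maximal-order hypothesis on $A$, and then observe that elements of $Q(Z(A))$ are central in $Q(A)$ to conclude $C\subseteq Z(A)$. Your write-up is in fact a bit more explicit than the paper's (which simply says ``as a consequence, $B=Z(A)$'' and omits the centrality check and the embedding $Q(Z(A))\hookrightarrow Q(A)$), and part~(2) is handled identically via Lemma~\ref{xxlem2.4}.
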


\begin{proof}
(1) Let $B$ be a subring of $Q(Z(A))$ containing $Z(A)$ 
such that $a B b\subseteq Z(A)$ for some 
$a,b\in Z(A)\setminus \{0\}$. Let $C:=AB$ be the subring 
of $Q(A)$ generated by $A$ and $B$. Then 
$aC b\subseteq A Z(A)=A$. Since $A$ is a maximal order,
$C=A$. As a consequence, $B=Z(A)$. The assertion follows.

(2) The assertion follows from part (1) and Lemma 
\ref{xxlem2.4}.
\end{proof}

Note that if $\GKdim Z(A)=2$, then $Z(A)$ may not be regular.
For example let $A=\Bbbk_{p_{ij}}[x_1,x_2,x_3,x_4]$ where
$$p_{ij}=\begin{cases} 1 & (i,j)=(1,2),\\
-1 & (i,j)=(1,3),(2,3),(1,4),(2,4),\\
q & (i,j)=(3,4)\end{cases}$$
where $q$ is not a root of unity. Then it is easy to see
that $Z(A)$ is the second Veronese subring $\Bbbk[x_1,x_2]^{(2)}$
of the commutative polynomial ring. Hence $Z(A)$ is not regular.

\begin{lemma}
\label{xxlem2.6}
Let $A$ be a connected graded domain and $t$ be a central element 
in $A$ of positive degree $d$. 
\begin{enumerate}
\item[(1)]
For every $\alpha\in \Bbbk^{\times}$, $A/(t-\alpha)$ contains
$(A[t^{-1}])_0$ as a subalgebra.
\item[(2)]
Suppose that $A$ is generated in degree $1$ and that $d\neq 0$ in 
$\Bbbk$. Then
$$\gldim A/(t-\alpha)=\gldim (A[t^{-1}])_0.$$
\item[(3)]
Suppose that $A$ is generated in degree $1$ and that $d\neq 0$ in 
$\Bbbk$. If $A$ has finite global dimension, then so does 
$A/(t-\alpha)$ for all $\alpha\in \Bbbk^{\times}$.
\end{enumerate}
\end{lemma}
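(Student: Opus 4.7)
The plan for (1) is to exhibit the ring map $(A[t^{-1}])_0 \hookrightarrow A[t^{-1}] \twoheadrightarrow A[t^{-1}]/(t-\alpha)$ and to identify the target with $A/(t-\alpha)$ (valid because the image of $t$ in $A/(t-\alpha)$ is the unit $\alpha \in \Bbbk^{\times}$, so inverting $t$ is already invisible after the quotient). To verify injectivity, I would take $x \in (A[t^{-1}])_0$ with $x = (t-\alpha) y$ for some $y \in A[t^{-1}]$, decompose $y = \sum_n y_n$ into $\Z$-homogeneous components, and compare graded pieces of $(t-\alpha) y$ with those of $x$. This yields $x = t y_{-d} - \alpha y_0$ in degree $0$ and the recursion $\alpha y_n = t y_{n-d}$ for all $n \neq 0$. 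Since $y$ has finite support, iterating the recursion from the largest nonzero index upward forces $n_{\max} \leq -d$, while iterating from the smallest nonzero index downward forces $n_{\min} \geq 0$; combined with $d > 0$, these force $y = 0$, whence $x = 0$.

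For (2), the strategy is to use the structural consequences of the two hypotheses. The assumption that $A$ is generated in degree $1$ implies $B := A[t^{-1}]$ is strongly $\Z$-graded (using $A_1$-generation of $A$ together with invertibility of $t$ one obtains $B_m B_n = B_{m+n}$ for all $m,n$). The assumption $d \neq 0$ in $\Bbbk$, combined with $\Bbbk$ being algebraically closed, makes the group $\mu_d$ of $d$-th roots of unity act on $B$ by $\zeta \cdot b_n := \zeta^n b_n$; this action fixes $t$ (as $\zeta^d = 1$) and hence descends to $A/(t-\alpha) = B/(t-\alpha)$. An averaging argument identifies the $\mu_d$-fixed subring of $A/(t-\alpha)$ with the image of $B^{\mu_d} = \bigoplus_k B_{kd} = B_0[t, t^{-1}]$, which collapses to $(A[t^{-1}])_0$ modulo $(t-\alpha)$. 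A short computation based on the strong grading will then identify $A/(t-\alpha)$ as a faithfully projective $(A[t^{-1}])_0$-module of rank $d$, with the inclusion $(A[t^{-1}])_0 \hookrightarrow A/(t-\alpha)$ split by the Reynolds operator $\frac{1}{d} \sum_{\zeta \in \mu_d} \zeta$. Standard invariant-theoretic results on fixed rings of finite group actions of invertible order (e.g.\ Morita equivalence of the skew group ring $A/(t-\alpha) * \mu_d$ with $(A[t^{-1}])_0$ under the present Galois-type conditions) then give the equality $\gldim A/(t-\alpha) = \gldim (A[t^{-1}])_0$.

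Part (3) will follow from (2) together with the bounds $\gldim (A[t^{-1}])_0 \leq \gldim A[t^{-1}] \leq \gldim A < \infty$, where the first inequality comes from the strong $\Z$-grading on $A[t^{-1}]$ and the second from localization. Alternatively, one can bypass (2) and argue directly: since $t - \alpha$ is a central non-zero-divisor in the domain $A$, Rees's theorem gives $\gldim A/(t-\alpha) \leq \gldim A < \infty$. The main obstacle I anticipate is the equality in (2); proving it rigorously in this noncommutative setting requires either a careful Morita/Galois verification for the $\mu_d$-fixed extension, or a direct construction of functorial correspondences between projective resolutions over the two rings that exploits the rank-$d$ projective $(A[t^{-1}])_0$-module structure on $A/(t-\alpha)$ and its splitting.
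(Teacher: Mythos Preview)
Your proposal is correct, but the paper's argument is more direct, particularly for parts (1) and (2), because it exploits one structural observation you do not make explicit: the quotient $A/(t-\alpha)$ is naturally $\mathbb{Z}/(d)$-graded (the ideal $(t-\alpha)$ is homogeneous for the $\mathbb{Z}/(d)$-grading on $A$ induced from the $\mathbb{Z}$-grading), and its degree-$0$ component is the image of the $d$th Veronese $T=A^{(d)}$. For (1), the paper identifies this degree-$0$ piece $T/(t-\alpha)$ with $(A[t^{-1}])_0$ via the standard isomorphism $T/(\alpha^{-1}t-1)\cong (T[t^{-1}])_0$ (citing \cite[Lemma~2.1]{RSS}); this replaces your element-level injectivity check, which is valid but heavier. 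For (2), the paper simply observes that when $A$ is generated in degree~$1$ this $\mathbb{Z}/(d)$-grading on $A/(t-\alpha)$ is \emph{strong}, and then invokes \cite[Lemma~2.2(iii)]{Yi}: for a ring strongly graded by a finite group of invertible order, the global dimension equals that of the identity component. Your $\mu_d$-action argument is the Pontryagin-dual reformulation of exactly this fact, so it is correct in substance, but it routes through skew group rings and a Galois/Morita verification where the paper gets away with a one-line citation. For (3), your primary route via (2) together with $\gldim(A[t^{-1}])_0\leq \gldim A[t^{-1}]\leq \gldim A$ matches the paper. Your alternative via Rees is tempting, but the lemma does not assume $A$ is noetherian, so the implication $\gldim A<\infty \Rightarrow \gldim A/(t-\alpha)<\infty$ would need additional justification in that generality; the paper avoids this by going through (2).
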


\begin{proof} (1) Let $T$ denote the $d$th Veronese subalgebra 
of $A$ where $d=\deg t$. So, in $T$, $t$ can be treated as an 
element of degree $1$. Now 
\begin{equation}
\label{E2.5.1}\tag{E2.5.1}
T/(t-\alpha)\cong T/(\alpha^{-1}t-1)\cong 
(T[(\alpha^{-1}t)^{-1}])_0= (T[t^{-1}])_0\cong (A[t^{-1}])_0
\end{equation}
where the second $\cong$ is due to \cite[Lemma 2.1]{RSS}. 

Note that $A/(t-\alpha)$ is a ${\mathbb Z}/(d)$-graded 
algebra with the degree 0 component being $T/(t-\alpha)$. 
By \eqref{E2.5.1} $A/(t-\alpha)$ contains $(A[t^{-1}])_0$ as 
a subalgebra.

(2) Since $A$ is generated in degree $1$, $A/(t-\alpha)$ is a 
strongly ${\mathbb Z}/(d)$-graded algebra with the degree 0 
component being $(A[t^{-1}])_0$. Since we assume 
$d\neq 0$ in $\Bbbk$, by \cite[Lemma 2.2(iii)]{Yi}, 
$$\gldim A/(t-\alpha)=\gldim (A[t^{-1}])_0.$$

(3) By part (2) it suffices to show that 
$(A[t^{-1}])_0$ has finite global dimension.
Since $A$ has finite global dimension, $A$ has finite graded
global dimension. Then $A[t^{-1}]$ has finite graded global
dimension. As a consequence, $(A[t^{-1}])_0$ has finite 
global dimension as required.
\end{proof}

To conclude this section we list two well-known results.

\begin{lemma} \cite[Lemma 7.6]{LPWZ}
\label{xxlem2.7}
Let $A$ be a connected graded algebra and $t$ be a central 
element of degree $1$. If $A/(t)$ has finite global 
dimension, then so does $A$.
\end{lemma}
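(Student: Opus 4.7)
The plan is to reduce to showing that the trivial module $\Bbbk$ has finite projective dimension over $A$, since for the connected graded algebra $A$ one has $\gldim A = \operatorname{pd}_A \Bbbk$.

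The argument proceeds in two steps. First, one must verify that $t$ is a non-zero-divisor on $A$. Because $t$ is central and homogeneous of positive degree, $\operatorname{Ann}_A(t) \subseteq A$ is a graded two-sided ideal annihilated by $t$, and therefore an $A/(t)$-module. If $\operatorname{Ann}_A(t)$ were non-zero, a minimal graded free resolution of $\Bbbk$ over $A$ would pick up an infinite periodic tail (the syzygies keep reproducing the annihilator), which upon reducing modulo $t$ would contradict $\gldim A/(t) < \infty$. In the intended applications of this lemma in the paper, $A$ is a domain, and this step is automatic.

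With regularity of $t$ in hand, the short exact sequence of graded $A$-modules
\[
0 \to A(-1) \xrightarrow{\;\cdot\,t\;} A \to A/(t) \to 0
\]
is a graded projective resolution of $A/(t)$ over $A$ of length $1$. A standard Cartan--Eilenberg change-of-rings argument, assembling a double complex from this resolution together with a minimal graded projective resolution of $\Bbbk$ over $A/(t)$, yields the bound
\[
\operatorname{pd}_A M \;\leq\; \operatorname{pd}_{A/(t)} M + 1
\]
for every graded $A/(t)$-module $M$. Applying this to $M=\Bbbk$ gives $\operatorname{pd}_A \Bbbk \leq \gldim A/(t)+1<\infty$, so $\gldim A<\infty$, completing the proof.

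The main obstacle is the first step: without regularity of $t$, the resolution of $A/(t)$ over $A$ is longer than one term, and the clean inequality above fails. One would then have to invoke the full Cartan--Eilenberg spectral sequence $E_2^{p,q}=\Ext^p_{A/(t)}(\Bbbk,\Ext^q_A(A/(t),\Bbbk))\Rightarrow \Ext^{p+q}_A(\Bbbk,\Bbbk)$ and separately control the higher Ext groups $\Ext^q_A(A/(t),\Bbbk)$. In the write-up itself one can simply appeal to \cite[Lemma 7.6]{LPWZ}; the outline above records the underlying strategy.
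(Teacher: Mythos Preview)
The paper does not prove this lemma; it is simply quoted from \cite[Lemma~7.6]{LPWZ} with no argument given. So there is no ``paper's proof'' to compare against, and your outline stands on its own.

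Your Step~2 is fine: once $t$ is a non-zero-divisor, the short exact sequence $0\to A(-1)\xrightarrow{t}A\to A/(t)\to 0$ together with the standard change-of-rings inequality $\operatorname{pd}_A M\le \operatorname{pd}_{A/(t)}M+1$ gives $\gldim A\le \gldim A/(t)+1<\infty$. The genuine gap is Step~1. Your argument that $\operatorname{Ann}_A(t)\neq 0$ forces $\gldim A/(t)=\infty$ does not hold: tensoring an infinite minimal $A$-free resolution of $\Bbbk$ with $A/(t)$ computes $\operatorname{Tor}^A_*(\Bbbk,A/(t))$, and the nonvanishing of those groups says nothing about the global dimension of $A/(t)$. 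In fact the statement as written here is false without a regularity hypothesis: take $A=\Bbbk[t]/(t^2)$ with $\deg t=1$. Then $t$ is a nonzero central element of degree~$1$, $A/(t)=\Bbbk$ has global dimension~$0$, yet $\gldim A=\infty$. So no argument can extract regularity of $t$ from the hypotheses stated.

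What is going on is that the original \cite[Lemma~7.6]{LPWZ} carries the hypothesis that $t$ is a regular (i.e.\ non-zero-divisor) central element; that hypothesis has been dropped in the restatement above. You already note that in every use of this lemma in the present paper $A$ is a domain, so regularity is automatic and your Step~2 suffices. For the write-up you should either restore the regularity hypothesis in the statement, or---as you propose at the end---just cite \cite[Lemma~7.6]{LPWZ} directly and drop the heuristic argument for regularity, which cannot be made to work.
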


\begin{lemma} \cite[Corollary 2]{SmZ}
\label{xxlem2.8}
Let $A$ be a finitely generated Ore domain that is not PI. 
Let $Z$ be the center of $A$. Then $\GKdim Z\leq \GKdim A-2.$
\end{lemma}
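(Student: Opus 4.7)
The plan is to reduce the statement to the Small--Stafford--Warfield gap theorem by central Ore localization. Let $d=\GKdim Z$. Since $Z$ is a commutative domain of GK dimension $d$, its field of fractions has transcendence degree $d$ over $\Bbbk$, so I can pick $z_1,\dots,z_d\in Z$ algebraically independent over $\Bbbk$. Set $T:=\Bbbk[z_1,\dots,z_d]$, $S:=T\setminus\{0\}$, and $K:=\Frac T=\Bbbk(z_1,\dots,z_d)$. Since $S$ lies in the center of $A$ it is automatically an Ore set, so I can form the localization $B:=A[S^{-1}]$; it is a finitely generated $K$-algebra with $K\subseteq Z(B)$. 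Because $A\subseteq B$ and the PI condition is inherited by subrings, $B$ is not PI.

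Next I invoke the Small--Stafford--Warfield gap theorem: any finitely generated algebra over a field with GK dimension strictly less than $2$ satisfies a polynomial identity. Applied to $B$ as a $K$-algebra, this gives $\GKdim_K B\geq 2$. It then remains to compare $\GKdim_\Bbbk$ and $\GKdim_K$; I claim $\GKdim_\Bbbk B\geq \GKdim_K B+d$. To check this, pick a finite-dimensional $K$-subspace $W\subseteq B$ that generates $B$ over $K$, and put $V:=\Bbbk+\sum_{i=1}^d\Bbbk z_i+W\subseteq B$, viewed as a $\Bbbk$-subspace. For each $n$, the products $z_1^{i_1}\cdots z_d^{i_d}\cdot w$ with $i_1+\cdots+i_d\leq n$ and $w$ ranging over a fixed $K$-basis of the span of $W^n$ all lie in $V^{2n}$; they are $\Bbbk$-linearly independent by algebraic independence of the $z_i$ together with $K$-linear independence of the chosen $w$'s. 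Hence $\dim_\Bbbk V^{2n}\geq \binom{n+d}{d}\cdot\dim_K W^n$, forcing $\GKdim_\Bbbk B\geq d+\GKdim_K B$. Finally, central Ore localization preserves GK dimension for finitely generated algebras (Krause--Lenagan), so $\GKdim_\Bbbk A=\GKdim_\Bbbk B\geq d+2=\GKdim Z+2$.

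The main obstacle I anticipate is the linear-independence bookkeeping in the comparison step, since $K$ is typically infinite-dimensional over $\Bbbk$; the point is to reorganize a putative $\Bbbk$-linear relation $\sum c_{I,j}z^I w_j=0$ as $\sum_j\bigl(\sum_I c_{I,j}z^I\bigr)w_j=0$, then use $K$-linear independence of the $w_j$ to conclude $\sum_I c_{I,j}z^I=0$ in $K$ for each $j$, then algebraic independence of the $z_i$ to conclude all $c_{I,j}=0$. A secondary issue is verifying the GK-preservation under central Ore localization without a Noetherian hypothesis on $A$; this is standard given finite generation of $A$, but should be cross-checked against the exact statement in Krause--Lenagan.
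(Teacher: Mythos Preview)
The paper does not prove this lemma; it is simply quoted as \cite[Corollary~2]{SmZ}. Your argument is the standard one and almost certainly matches the cited source: localize at a transcendence basis of $Z$ to pass to a $K$-algebra, invoke Small--Stafford--Warfield over $K$, and then compare $\GKdim_\Bbbk$ with $\GKdim_K$.

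There is one genuine slip in the comparison step, though. You take $W$ to be a finite-dimensional \emph{$K$-subspace} of $B$ and then set $V=\Bbbk+\sum_i\Bbbk z_i+W$; but as soon as $d\ge 1$ the space $W$ is infinite-dimensional over $\Bbbk$, so $V$ is not an admissible test subspace for $\GKdim_\Bbbk$, and the inequality $\dim_\Bbbk V^{2n}\ge\binom{n+d}{d}\dim_K W^n$ carries no information. The repair is immediate and in fact simplifies the argument: take instead a finite $\Bbbk$-subspace $W_0\subseteq A$ generating $A$ (so $KW_0$ generates $B$ over $K$), set $V=\Bbbk+\sum_i\Bbbk z_i+W_0\subseteq A$, and for each $n$ choose the $K$-basis of $(KW_0)^n=K\cdot W_0^n$ inside $W_0^n$. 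Your linear-independence bookkeeping then goes through verbatim and gives $\dim_\Bbbk V^{2n}\ge\binom{n+d}{d}\dim_K(KW_0)^n$ directly in $A$, so the secondary issue you flag---whether central Ore localization preserves $\GKdim$ without a noetherian hypothesis---never arises.
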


\section{Some cancellation lemmas}
\label{xxsec3}

First we recall a classical result concerning the 
cancellation property.

\begin{lemma} \cite[Corollary 3.4]{AEH}
\label{xxlem3.1} 
Let $A$ be an affine domain of GK dimension at most one.
\begin{enumerate}
\item[(1)]
If $A=\Bbbk$, then it is trivially strongly
retractable and strongly cancellative.
\item[(2)]
If $A=\Bbbk[t]$, then it is strongly cancellative.
\item[(3)]
If $A\not\cong \Bbbk[t]$, then it is strongly
retractable, and consequently, strongly cancellative.
\end{enumerate}
\end{lemma}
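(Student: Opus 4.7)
The plan is to reduce to the commutative case and then invoke the classical Abhyankar--Eakin--Heinzer theorem \cite[Corollary 3.4]{AEH} case by case.

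First I would establish that any such $A$ must be commutative. By the Small--Warfield theorem, every affine $\Bbbk$-algebra of $\GKdim\leq 1$ satisfies a polynomial identity, so $A$ is a PI domain. Let $Z=Z(A)$; then $A$ is module-finite over $Z$, and the quotient division ring $Q(A)$ is central simple over $Q(Z)$. Since $Q(Z)$ is a function field of transcendence degree at most one over the algebraically closed field $\Bbbk$, Tsen's theorem shows its Brauer group is trivial, so $Q(A)=Q(Z)$ and $A$ is commutative. In particular, when $\GKdim A=0$, $A$ is a finite-dimensional commutative domain over an algebraically closed field and must equal $\Bbbk$; otherwise $A$ is a commutative affine domain of $\GKdim$ one.

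For part (1) the argument is direct: if $\phi\colon \Bbbk[t_1,\ldots,t_n] \cong B[s_1,\ldots,s_n]$, then $B$ is a $\Bbbk$-subalgebra of an integral domain, so is itself a commutative integral domain containing $\Bbbk$. Comparing transcendence degrees of fraction fields gives $\mathrm{tr.deg}_{\Bbbk} \Frac B = 0$, and since $\Bbbk$ is algebraically closed, $B=\Bbbk$. Hence $\phi(A)=\Bbbk=B$ and $\phi(Z(A))=Z(B)$ trivially, so $A$ is strongly retractable and hence strongly cancellative.

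For parts (2) and (3), $A$ is a commutative one-dimensional affine $\Bbbk$-domain, and the Abhyankar--Eakin--Heinzer theorem \cite[Corollary 3.4]{AEH} directly applies: any isomorphism $\phi\colon A[t_1,\ldots,t_n]\cong B[s_1,\ldots,s_n]$ forces $A\cong B$, and moreover if $A\not\cong \Bbbk[t]$ then $\phi(A)=B$. This yields strong cancellativity in part (2) and strong retractability (hence strong cancellativity) in part (3); since $A$ is commutative, $Z(A)=A$ and the $Z$-retractability clause is subsumed by $\phi(A)=B$.

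The main obstacle is conceptual rather than technical: one must recognize that over an algebraically closed base field there is no room for noncommutative affine domains of $\GKdim$ one, so the classical commutative AEH result can be quoted verbatim. The rest is matching the AEH conclusions against Definition~\ref{xxdef1.4}.
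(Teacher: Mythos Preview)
Your proposal is correct. The paper does not actually supply a proof of Lemma~\ref{xxlem3.1}: it simply records the statement with the citation \cite[Corollary 3.4]{AEH} and uses it as a black box. Your write-up is therefore an elaboration rather than an alternative argument, and it is consistent with the paper's implicit reasoning---the commutativity of affine domains of $\GKdim\leq 1$ is taken for granted in the paper as well (see the opening line of the proof of Lemma~\ref{xxlem2.4}). One minor comment: the clause ``$A$ is module-finite over $Z$'' is stronger than what you actually use; the argument only needs that $Q(A)$ is finite over $Q(Z)$, which follows from Posner/Kaplansky, and this already forces $Q(A)=Q(Z)$ by Tsen. Otherwise your matching of the AEH conclusions to Definition~\ref{xxdef1.4} is exactly right.
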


The following lemma concerns cancellation properties for
a tensor product $A\otimes R$ where $R$ is commutative. 

\begin{lemma}
\label{xxlem3.2}
Let $A$ be an algebra with trivial center and let $R$ be a commutative
algebra that is cancellative {\rm{(}}respectively, strongly 
cancellative{\rm{)}}. Then the tensor product $A\otimes R$ is both 
cancellative {\rm{(}}respectively, strongly cancellative{\rm{)}} and 
m-cancellative {\rm{(}}respectively, strongly m-cancellative{\rm{)}}.
\end{lemma}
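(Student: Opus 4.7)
The plan is to exhibit an explicit isomorphism $A\otimes R\cong B$ by recovering $B$ as a quotient of $B[s]$ by the central ideal $(s)$ and then transporting that quotient across any given isomorphism. Let $\phi:(A\otimes R)[t]\cong B[s]$ be any algebra isomorphism. Since $Z(A)=\Bbbk$, the center of $(A\otimes R)[t]$ is exactly $R[t]$, so $\phi$ restricts to an isomorphism of commutative rings $\gamma:R[t]\to Z(B)[s]$. Applying cancellativity of $R$ to $\gamma$ yields a $\Bbbk$-algebra isomorphism $R\cong Z(B)$.

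Now set $t':=\gamma^{-1}(s)\in R[t]$; since $t'$ is central in $(A\otimes R)[t]$, the ideal it generates is two-sided, and $\phi$ carries it onto $(s)B[s]$. Using that $t'$ lies in the commutative factor of $(A\otimes R)[t]=A\otimes R[t]$, one computes
\[
  (A\otimes R)[t]\big/(t')\;\cong\;A\otimes\bigl(R[t]/(t')\bigr),
\]
while on the other side $B[s]/(s)\cong B$ in the standard way. It then remains to identify $R[t]/(t')$ with $R$: via $\gamma$ we have $R[t]/(t')\cong Z(B)[s]/(s)\cong Z(B)$, and cancellativity of $R$ already produced $Z(B)\cong R$. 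Combining the two chains gives $B\cong A\otimes R$, proving cancellativity. The strongly cancellative version runs identically with $n$ variables in place of one, invoking strong cancellativity of $R$ at the center step.

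For the m-cancellative (respectively, strongly m-cancellative) case, replace $\phi$ by an invertible $((A\otimes R)[t],B[s])$-bimodule $\Omega$ implementing the Morita equivalence, and use the induced center isomorphism $\omega$ from \eqref{E1.3.1} in place of $\gamma$. With $t':=\omega^{-1}(s)$, both $t'$ and $s$ act identically on $\Omega$, so the quotient $\Omega/t'\Omega=\Omega/\Omega s$ is an invertible bimodule over the central quotients, producing a Morita equivalence between $A\otimes(R[t]/(t'))\cong A\otimes R$ and $B[s]/(s)\cong B$. The only essential input beyond this bookkeeping is cancellativity of $R$, used precisely to identify $R[t]/(t')$ with $R$; the trivial-center hypothesis on $A$ is used only to guarantee that $t'$ lies in the center so that the quotient by $(t')$ is well defined. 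The main technical point to verify, rather than a real obstacle, is the descent of the invertible bimodule $\Omega$ through the central quotient in the m-cancellative case; everything else is formal manipulation once the element $t'$ has been produced from $s$ via the center isomorphism.
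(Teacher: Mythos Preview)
Your proof is correct and follows essentially the same route as the paper's: restrict to centers, pull back the ideal generated by $s$ (or $s_1,\dots,s_n$) to a central ideal on the $A\otimes R$ side, quotient, and then identify the commutative quotient $R[t]/(t')$ with $R$ via cancellativity. The paper obtains that last identification by taking centers of the quotient isomorphism $A\otimes(R[t_1,\dots,t_n]/J)\cong B$ (invoking $Z(A)=\Bbbk$ a second time), whereas you get it directly from $\gamma$; both are fine. One small wording issue: your closing sentence says the trivial-center hypothesis is used ``only to guarantee that $t'$ lies in the center,'' but $t'$ is central by construction; what the hypothesis really buys you is that the center equals $R[t]$, so that $t'\in R[t]$ and the factorization $(A\otimes R)[t]/(t')\cong A\otimes(R[t]/(t'))$ holds---which is exactly what you use, and what you state correctly earlier in the argument.
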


\begin{proof} The proofs for the assertions without the word ``strongly'' 
are similar by taking $n=1$ in the following proof. So we only prove 
the ``strongly'' version.

First we show that $A\otimes R$ is strongly cancellative assuming
that $R$ is strongly cancellative. Let $B$ be an algebra such that
$$\phi: (A\otimes R)[t_1,\cdots,t_n]\xrightarrow{\cong} B[s_1,\cdots,s_n]$$
is an isomorphism of algebras. Taking the center on both sides, 
we obtain an isomorphism
$$\phi_Z: R[t_1,\cdots,t_n]\xrightarrow{\cong} Z(B)[s_1,\cdots,s_n]$$
where $\phi_Z$ is a restriction of $\phi$ on the centers. 
Since $R$ is strongly cancellative, $R\cong Z(B)$. 
%Let $$f_i=\phi_Z^{-1}(s_i)=\phi^{-1}(s_i)\in R[t_1,\cdots,t_n]$$
%for $i=1,\cdots,n$. 
Let $I$ be the ideal of $Z(B)[s_1,\cdots,s_n]$
generated by $\{s_i\}_{i=1}^n$. Then $J:=\phi^{-1}(I)$ is an ideal of
$R[t_1,\cdots,t_n]$ and 
$$A\otimes (R[t_1,\cdots,t_n]/J)\cong B\otimes (\Bbbk[s_1,\cdots,s_n]/I)
\cong B.$$
Taking the center on both sides of the above isomorphism and using the 
fact that $Z(A)=\Bbbk$, we have
$$R[t_1,\cdots,t_n]/J\cong Z(B)\cong R.$$
Therefore 
$$B\cong A\otimes (R[t_1,\cdots,t_n]/J)\cong A\otimes R$$
as required.

Next we show that if $R$ is strongly cancellative, then $A\otimes R$ 
is strongly m-cancellative. Let $B$ be an algebra such that
$$A':=(A\otimes R)[t_1,\cdots,t_n] \quad
{\text{is Morita equivalent to}} \quad B[s_1,\cdots,s_n]=:B'.$$
By \cite[Lemma 3.1(iii)]{LuWZ}, there is an invertible $(A',B')$-bimodule
$\Omega$ and an isomorphism 
$$\omega: Z(A')=R[t_1,\cdots,t_n]\xrightarrow{\cong}
Z(B)[s_1,\cdots,s_n]=Z(B')$$
such that the left action of $x\in Z(A')$ on $\Omega$ agrees with the 
right action of $\omega(x)\in Z(B')$ on $\Omega$. Since $R$ is strongly 
cancellative, $R\cong Z(B)$. 
%Let $$f_i=\omega^{-1}(s_i)\in R[t_1,\cdots,t_n]$$
%for $i=1,\cdots,n$. 
Let $I$ be the ideal of $Z(B)[s_1,\cdots,s_n]$
generated by $\{s_i\}_{i=1}^n$. Then $J:=\omega^{-1}(I)$ is an ideal 
of $R[t_1,\cdots,t_n]$, and by \cite[Lemma 3.1(v)]{LuWZ}, 
$$A\otimes (R[t_1,\cdots,t_n]/J) \quad
{\text{is Morita equivalent to}} \quad B\otimes (\Bbbk[s_1,\cdots,s_n]/I)
\cong B.$$
Taking the center of the above Morita equivalence and using the fact
that $Z(A)=\Bbbk$, we have
$$R[t_1,\cdots,t_n]/J\cong Z(B)\cong R.$$
Hence
$$A\otimes (R[t_1,\cdots,t_n]/J)\cong A\otimes R.$$
Therefore $A\otimes R$ is Morita equivalent to $B$
as required.
\end{proof}

\begin{corollary}
\label{xxcor3.3}
Let $\Bbbk$ be of characteristic zero and $A$ be a commutative algebra.
Let $\delta$ be a locally nilpotent derivation of $A$ with $\delta(y)=1$ 
for some $y\in A$. Suppose that $\ker(\delta)$ is cancellative 
{\rm{(}}respectively, strongly cancellative{\rm{)}}. Then $A[x;\delta]$ 
is cancellative {\rm{(}}respectively, strongly cancellative{\rm{)}}.
\end{corollary}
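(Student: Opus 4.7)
The plan is to identify $A[x;\delta]$ explicitly as a tensor product of a noncommutative algebra with trivial center and a cancellative commutative algebra, so that Lemma \ref{xxlem3.2} applies directly.

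First, I would invoke the standard slice theorem for locally nilpotent derivations in characteristic zero. Since $\delta$ is locally nilpotent on the commutative algebra $A$ and $\delta(y)=1$, the map
\begin{equation*}
\pi : A \longrightarrow A, \qquad \pi(a) = \sum_{n\geq 0} \frac{(-y)^n}{n!}\, \delta^n(a),
\end{equation*}
is a finite sum for each $a$ (by local nilpotence), is well-defined (by ${\text{char}}\;\Bbbk=0$), and is an algebra retraction onto $K:=\ker(\delta)$. The standard consequence is that $A = K[y]$ with $y$ transcendental over $K$, and under this identification $\delta$ is just $d/dy$: it kills $K$ and sends $y$ to $1$.

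Next, I would rewrite the Ore extension. Since $A=K[y]$ and $\delta$ is trivial on $K$, the defining relations $xa-ax=\delta(a)$ reduce to $xk=kx$ for $k\in K$ and $xy-yx=1$. Hence there is an algebra isomorphism
\begin{equation*}
A[x;\delta] \;\cong\; K \otimes_{\Bbbk} A_1(\Bbbk),
\end{equation*}
where $A_1(\Bbbk) := \Bbbk\langle x,y\rangle/(xy-yx-1)$ is the first Weyl algebra.

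Finally, I would invoke the classical fact that in characteristic zero $A_1(\Bbbk)$ is simple with center $\Bbbk$, so it has trivial center. Since $K=\ker(\delta)$ is commutative and cancellative (respectively strongly cancellative) by hypothesis, Lemma \ref{xxlem3.2} applied to $A_1(\Bbbk)\otimes_\Bbbk K$ shows that $A[x;\delta]$ is cancellative (respectively strongly cancellative). The only nontrivial step is the first one: the hypotheses ${\text{char}}\;\Bbbk=0$ and $\delta(y)=1$ are essential there to trivialize $\delta$ and peel off the $K$-factor; after this reduction, the conclusion is immediate from Lemma \ref{xxlem3.2}, with no further calculation needed.
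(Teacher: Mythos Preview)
Your proof is correct and follows essentially the same approach as the paper: both establish the isomorphism $A[x;\delta]\cong \ker(\delta)\otimes_\Bbbk A_1(\Bbbk)$ and then apply Lemma~\ref{xxlem3.2} using that $Z(A_1(\Bbbk))=\Bbbk$ in characteristic zero. The only difference is cosmetic---the paper cites \cite[Lemma 14.6.4]{MR} for the isomorphism, whereas you unpack the slice theorem explicitly.
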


\begin{proof}
Let $C=\ker (\delta)$. By \cite[Lemma 14.6.4]{MR} $A[x;\delta]\cong 
C\otimes A_1(\Bbbk)$. By hypothesis, then $C$ is cancellative and 
$Z(A_1(\Bbbk))=\Bbbk$. The assertion follows from Lemma \ref{xxlem3.2}.
\end{proof}

With a slight modification to the previous lemma we can consider the 
case in which $R$ is a (noncommutative) $Z$-retractable algebra.

\begin{lemma}
\label{xxlem3.4}
Let $A$ be an algebra with trivial center and let $R$ be a 
$Z$-retractable algebra {\rm{(}}respectively, strongly 
$Z$-rectractable{\rm{)}}. Then the tensor product $A\otimes R$ 
is $Z$-rectractable {\rm{(}}respectively, strongly $Z$-retractable{\rm{)}}.
\end{lemma}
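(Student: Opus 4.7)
The plan is to mimic the proof of Lemma \ref{xxlem3.2}, with ``$Z$-retractable'' playing the role of ``cancellative''. I treat the strong version (the non-strong version is the special case $n=1$). Given an isomorphism
\[
\phi\colon (A\otimes R)[t_1,\ldots,t_n]\xrightarrow{\cong} B[s_1,\ldots,s_n],
\]
the goal is to prove $\phi(Z(A\otimes R))=Z(B)$. Since $Z(A)=\Bbbk$, a direct computation gives $Z(A\otimes R)=1\otimes Z(R)\cong Z(R)$, and hence $Z((A\otimes R)[\vec t])=Z(R)[\vec t]$ while $Z(B[\vec s])=Z(B)[\vec s]$. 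Restricting $\phi$ to centers yields an isomorphism $\phi_Z\colon Z(R)[\vec t]\cong Z(B)[\vec s]$, exactly paralleling the analogous step in Lemma \ref{xxlem3.2}.

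To bring in the strong $Z$-retractability of $R$, which concerns isomorphisms of the form $R[\vec t]\cong B'[\vec s']$, I would pass through the centralizer of $A\otimes 1$. Because $Z(A)=\Bbbk$, this centralizer in $(A\otimes R)[\vec t]$ equals $1\otimes R[\vec t]\cong R[\vec t]$, so $\phi$ restricts to an isomorphism $\phi'\colon R[\vec t]\xrightarrow{\cong} C$, where $C:=C_{B[\vec s]}(\phi(A\otimes 1))$. The variables $\vec s$ lie in $Z(B[\vec s])\subseteq C$, and since $\phi(A\otimes 1)$ together with $C$ generates all of $B[\vec s]$, one verifies that $Z(C)=Z(B)[\vec s]$ (centralizing both $\phi(A\otimes 1)$ and $C$ forces an element to centralize $B[\vec s]$). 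Identifying $C$ with a polynomial extension $B'[\vec s]$ for some algebra $B'$, I would apply the strong $Z$-retractability of $R$ to the resulting iso $\phi'\colon R[\vec t]\cong B'[\vec s]$, obtaining $\phi'(Z(R))=Z(B')$. Since $Z(C)=Z(B)[\vec s]$ and $Z(C)=Z(B')[\vec s]$ under the identification, we are forced to have $Z(B')=Z(B)$, whence the desired $\phi(Z(R))=Z(B)$.

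The main obstacle is the identification of the centralizer $C$ as a polynomial extension $B'[\vec s]$. When $\phi(A\otimes 1)\subseteq B$, the decomposition $C=C_B(\phi(A\otimes 1))[\vec s]$ is automatic; in general, however, $\phi(A\otimes 1)$ may carry terms of positive $\vec s$-degree, so $C$ is not obviously of this form. Overcoming this likely requires either composing $\phi$ with an automorphism of $(A\otimes R)[\vec t]$ that moves $\phi(A\otimes 1)$ into $B$, or a finer filtration-based analysis of the centralizer to extract the polynomial structure. This bookkeeping will be the principal technical content of the proof.
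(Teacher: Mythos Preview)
The paper omits the proof entirely, saying only that it ``is similar to the proof of Lemma~\ref{xxlem3.2}.'' A literal transplant of that argument would restrict $\phi$ to centers, obtaining $\phi_Z\colon Z(R)[\vec t]\cong Z(B)[\vec s]$, and then assert $\phi_Z(Z(R))=Z(B)$. But that last step is exactly the (strong) retractability of the \emph{commutative} algebra $Z(R)$, which is not what is assumed: the hypothesis is that $R$ is $Z$-retractable, a statement about isomorphisms of $R[\vec t]$, not of $Z(R)[\vec t]$. One direction does hold---$Z(R)$ retractable implies $R$ $Z$-retractable, by restricting any isomorphism $R[\vec t]\cong B'[\vec s]$ to centers---but the converse is not obvious. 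You are therefore right to look for a route that actually invokes the hypothesis on $R$ itself.

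Your centralizer idea is the natural such route, and everything through $Z(C)=Z(B)[\vec s]$ is correct. The obstacle you isolate is genuine: when $\phi(A\otimes 1)\not\subseteq B$, the centralizer $C$ need not be homogeneous for the $\vec s$-grading, so there is no evident subalgebra $B'\subseteq B[\vec s]$ with $C=B'[\vec s]$, and without that decomposition you cannot feed $\phi'\colon R[\vec t]\cong C$ into the definition of $Z$-retractability. Neither of your suggested repairs clearly works. For the automorphism idea, note that any automorphism you compose with must fix $Z(B)$ (respectively $Z(R)$) setwise, since the conclusion $\phi(Z(R))=Z(B)$ refers to the \emph{given} $\phi$; arranging that while simultaneously pushing $\phi(A\otimes 1)$ into $B$ looks delicate.

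In short, your proposal correctly exposes a gap hidden behind the paper's one-line dismissal. In every application in the paper, $Z$-retractability of $R$ is actually established \emph{via} retractability or $\mathrm{LND}^H$-rigidity of $Z(R)$ (see the proofs of Lemmas~\ref{xxlem3.6} and~\ref{xxlem3.7}), so in practice the stronger hypothesis is available and the naive center-restriction argument suffices; but as stated, Lemma~\ref{xxlem3.4} appears to require more than ``similar to Lemma~\ref{xxlem3.2}'' would suggest.
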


The proof of Lemma \ref{xxlem3.4} is similar to the proof of Lemma 
\ref{xxlem3.2}, so it is omitted. We call an algebra $R$ 
{\it universally right noetherian} if $A\otimes R$ is right 
noetherian for every right noetherian $\Bbbk-$algebra $A$. This 
property was studied in \cite{ASZ}. Thus, if in the previous 
lemma, we suppose that $A$ is right noetherian algebra and $R$ is 
universally right noetherian, then $A\otimes R$ is Hopfian, and 
by \cite[Lemma 3.6]{LeWZ} $A\otimes R$ is cancellative (respectively, 
strongly cancellative).

The following lemma is useful in some situations.

\begin{lemma}
\label{xxlem3.5}
Let $A$ be a noetherian algebra such that
\begin{enumerate}
\item[(i)]
its center $Z(A)$ is the commutative polynomial ring 
$\Bbbk[t]$ for some $t\in A$, and 
\item[(ii)]
$t$ is in the ideal $[A,A]$ of $A$ generated by the commutators
and $[A,A]\neq A$.
\end{enumerate}
Then $A$ is strongly $Z$-retractable and strongly cancellative.
\end{lemma}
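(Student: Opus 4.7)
The plan is to apply Theorem \ref{xxthm1.5} in its stable-property form (I only aim at strong cancellativity and strong $Z$-retractability) with a carefully chosen property. I would define $\mathcal{P}$ as follows: a $\Bbbk$-algebra $R$ has $\mathcal{P}$ if the two-sided ideal of $R$ generated by all commutators equals $R$. This is plainly an isomorphism invariant (and trivially holds for the zero ring); it is not Morita invariant, which is exactly why the statement of the lemma does not claim the ``m-'' versions. The heart of the argument is to show that $\mathcal{P}$ distinguishes the single fiber $A/tA$ from every other closed fiber of $\Spec Z(A)$, so that the resulting discriminant is exactly $t$.

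The first step is to compute $D_{\mathcal{P}}(A)$. Since $\Bbbk$ is algebraically closed, every $\mathfrak{m} \in \MaxSpec(Z(A))$ has the form $\mathfrak{m}_\alpha = (t - \alpha)$ for some $\alpha \in \Bbbk$. Write $\bar t$ for the image of $t$ in $A/\mathfrak{m}_\alpha A$; by hypothesis (ii), $\bar t$ lies in the commutator ideal of $A/\mathfrak{m}_\alpha A$. When $\alpha \neq 0$, $\bar t = \alpha$ is a unit, so the commutator ideal is all of $A/\mathfrak{m}_\alpha A$, placing $\mathfrak{m}_\alpha$ in $L_{\mathcal{P}}(A)$. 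When $\alpha = 0$, the inclusion $tA \subseteq [A,A]$ identifies the commutator ideal of $A/tA$ with $[A,A]/tA$; if this equaled $A/tA$, then $[A,A] = A$, contradicting hypothesis (ii). Hence $D_{\mathcal{P}}(A) = \{(t)\}$, so $I_{\mathcal{P}}(A) = (t)$ and $d_{\mathcal{P}}(A) = t$ is a nonzero non-invertible element of $Z(A)$.

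Second, I would verify $A$-stability. For $A' = A \otimes \Bbbk[s_1, \dots, s_n]$, the center is $\Bbbk[t, s_1, \dots, s_n]$, whose maximal ideals are the $\mathfrak{n} = (t - \alpha, s_1 - \beta_1, \dots, s_n - \beta_n)$. Since $A'/\mathfrak{n} A' \cong A/(t - \alpha)A$, the analysis above transfers verbatim: $\mathfrak{n} \in L_{\mathcal{P}}(A')$ precisely when $\alpha \neq 0$, and consequently $I_{\mathcal{P}}(A') = (t) = I_{\mathcal{P}}(A) \otimes \Bbbk[s_1, \dots, s_n]$.

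With the nonzero non-invertible discriminant and the $A$-stability in hand, Theorem \ref{xxthm1.5} immediately yields that $A$ is strongly $Z$-retractable and strongly cancellative. There is no serious obstacle: the only real creative ingredient is the choice of property, and hypotheses (i) and (ii) are engineered precisely so that ``the commutator ideal equals the whole ring'' isolates the point $(t)$ in $\MaxSpec(Z(A))$ while being satisfied on the complement.
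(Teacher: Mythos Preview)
Your proposal is correct and follows essentially the same approach as the paper: the paper also takes $\mathcal{P}$ to be the property that the commutator ideal is the whole algebra, shows that $\mathcal{P}$ fails at $(t)$ and holds at every $(t-\alpha)$ with $\alpha\neq 0$, and then invokes Theorem~\ref{xxthm1.5}. The only difference is that you verify $A$-stability of $\mathcal{P}$ by hand, whereas the paper absorbs this into the proof of Theorem~\ref{xxthm1.5} (where it is noted that over an algebraically closed field any such property is automatically stable via \cite[Lemma~6.1]{LuWZ}); your explicit check is harmless but unnecessary.
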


\begin{proof} Let ${\mathcal P}$ be the property that the 
commutators generate the whole algebra. By 
(ii) the property ${\mathcal P}$ fails for the maximal
ideal $(t)$ in $\Bbbk[t]$ since $[\overline{A},\overline{A}]
\neq \overline{A}$ where $\overline{A}=A/(t)$. By (ii) again, 
$(t-\alpha)+[A,A]=A$ for all $\alpha\neq 0$. This means that
the property ${\mathcal P}$ holds for the maximal ideal 
$(t-\alpha)$ in $\Bbbk[t]$ for all $\alpha\neq 0$. Thus the 
${\mathcal P}$-discriminant of $A$ is $t$. The assertion 
follows from Theorem \ref{xxthm1.5}.
\end{proof}

We refer to \cite{LeWZ, LuWZ} for the definition of 
${\text{LND}}^H_t$-rigid in the proof of the following
lemma.

\begin{lemma}
\label{xxlem3.6}
Let $A$ be a noetherian domain with $Z(A)=\Bbbk[t]$.
Let ${\mathcal P}$ be a property such that the 
${\mathcal P}$-discriminant is $t$. 
\begin{enumerate}
\item[(1)]
$A$ is strongly $Z$-retractable and 
strongly cancellative.
\item[(2)]
If ${\mathcal P}$ is a Morita invariant, then 
$A$ is strongly $Z$-retractable, strongly 
m-$Z$-retractable, strongly cancellative and strongly 
m-cancellative.
\end{enumerate}
\end{lemma}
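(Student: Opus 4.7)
The plan is that Lemma \ref{xxlem3.6} is a near-immediate specialization of Theorem \ref{xxthm1.5} to the case in which the $\mathcal{P}$-discriminant is the specific element $t$. First I would observe that the hypothesis on $d$ in Theorem \ref{xxthm1.5} is satisfied: by assumption $I_{\mathcal{P}}(A)=(t)$ is a proper principal ideal of $Z(A)=\Bbbk[t]$, so its generator $t$ is nonzero and non-invertible in $Z(A)$.

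For part (1), I would invoke \cite[Lemma 6.1]{LuWZ} (exactly as in the proof of Theorem \ref{xxthm1.5}) to conclude that, since $\Bbbk$ is algebraically closed, the property $\mathcal{P}$ is automatically stable. With $\mathcal{P}$ stable and $d=t$ nonzero and non-invertible, the ``stable'' branch of Theorem \ref{xxthm1.5} delivers that $A$ is strongly $Z$-retractable and strongly cancellative.

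For part (2), the additional hypothesis that $\mathcal{P}$ is a Morita invariant lets us also apply the ``Morita invariant'' branch of Theorem \ref{xxthm1.5}, which yields that $A$ is strongly m-$Z$-retractable and strongly m-cancellative. Combining with part (1) produces all four conclusions claimed in (2).

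No substantive obstacle arises: the lemma merely repackages the two parallel conclusions of Theorem \ref{xxthm1.5} in a form tailored for the later applications in Section 4, where the discriminant will be explicitly identified as $t$ in various concrete settings.
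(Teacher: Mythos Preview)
Your proposal is correct and follows essentially the same route as the paper. The only difference is packaging: you invoke Theorem~\ref{xxthm1.5} as a black box, whereas the paper (which only writes out part~(2)) unpacks the underlying \cite{LuWZ} machinery one step further, observing that $t$ is an \emph{effective} element in $\Bbbk[t]$ via \cite[Example 2.8]{LeWZ} and then citing \cite[Theorem 3.10, Proposition 3.7(ii), Lemmas 4.4 and 4.6]{LuWZ} directly through the $\mathrm{LND}^H_t$-rigidity language rather than through Theorem~\ref{xxthm1.5}.
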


\begin{proof} We only prove (2). 

Since $t$ is an effective element in $\Bbbk[t]$ by 
\cite[Example 2.8]{LeWZ}. By \cite[Theorem 3.10]{LuWZ},
$Z$ is strongly ${\text{LND}}^H_t$-rigid. By 
\cite[Proposition 3.7(ii)]{LuWZ}, $A$ is both strongly 
$Z$-retractable and strongly m-$Z$-retractable. Since 
$A$ is noetherian, it is Hopfian in the sense of 
\cite[Definition 3.4]{LeWZ}. By \cite[Lemmas 4.4 and 4.6]{LuWZ}, 
$A$ is strongly cancellative and strongly m-cancellative.
\end{proof}

Next we consider the connected graded case.

\begin{lemma}
\label{xxlem3.7} 
Let $A$ be a noetherian connected graded domain.
\begin{enumerate}
\item[(1)]
If $Z(A)$ has GK dimension $\leq 1$ and $Z(A)$ is not isomorphic to 
$\Bbbk[t]$, then 
$A$ is strongly $Z$-retractable, strongly m-$Z$-retractable, 
strongly cancellative and strongly m-cancellative.
\end{enumerate}
For the following parts, we assume that $A$ is generated in degree $1$,
that $Z(A)\cong \Bbbk[t]$ and that ${\text{char}}\; \Bbbk=0$.
\begin{enumerate}
\item[(2)]
If $\gldim A/(t)=\infty$ and $\gldim A/(t-1)<\infty$, then 
$A$ is strongly $Z$-retractable, strongly m-$Z$-retractable, 
strongly cancellative and strongly m-cancellative.
\item[(3)]
Suppose the global dimension of $A$ is finite and 
$\gldim A/(t)=\infty$. Then $A$ is strongly $Z$-retractable, 
strongly m-$Z$-retractable, strongly cancellative and 
strongly m-cancellative.
\item[(4)]
Suppose $A$ is AS-regular and $\gldim A/(t)=\infty$. 
Then $A$ is strongly $Z$-retractable, 
strongly m-$Z$-retractable, strongly cancellative and 
strongly m-cancellative.
\end{enumerate}
\end{lemma}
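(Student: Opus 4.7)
The plan is to treat parts (2), (3), (4) by a unified $\mathcal{P}$-discriminant argument that funnels into Lemma \ref{xxlem3.6}(2), and to handle part (1) separately by exploiting the affineness of $Z(A)$ guaranteed by Lemma \ref{xxlem2.4}(1) together with the classical Lemma \ref{xxlem3.1}.

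For part (2), take $\mathcal{P}$ to be the Morita-invariant property ``has finite global dimension.'' The hypothesis $\gldim A/(t)=\infty$ places $(t)$ in $D_\mathcal{P}(A)$. For any $\alpha\in\Bbbk^\times$, Lemma \ref{xxlem2.6}(2) applies (since $\operatorname{char}\Bbbk=0$ makes $\deg t$ nonzero in $\Bbbk$ and $A$ is generated in degree one), giving
\[
\gldim A/(t-\alpha)=\gldim (A[t^{-1}])_0=\gldim A/(t-1)<\infty,
\]
so $(t-\alpha)\in L_\mathcal{P}(A)$ for every nonzero $\alpha$. Hence $D_\mathcal{P}(A)=\{(t)\}$ and the $\mathcal{P}$-discriminant is $d_\mathcal{P}(A)=t$ in $Z(A)=\Bbbk[t]$, so Lemma \ref{xxlem3.6}(2) delivers the four conclusions. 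Part (3) reduces at once to part (2) via Lemma \ref{xxlem2.6}(3): $\gldim A<\infty$ together with the hypotheses on $A$ forces $\gldim A/(t-\alpha)<\infty$ for every $\alpha\in\Bbbk^\times$. Part (4) follows from part (3) because AS-regularity entails finite global dimension by Definition \ref{xxdef0.1}(d).

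For part (1), Lemma \ref{xxlem2.4}(1) tells us $Z(A)$ is affine noetherian of GK dimension at most one, and the hypothesis $Z(A)\not\cong\Bbbk[t]$ splits the argument into two subcases. If $Z(A)=\Bbbk$, then $A$ has trivial center, and Lemma \ref{xxlem3.2} applied with $R=\Bbbk$ (strongly cancellative by Lemma \ref{xxlem3.1}(1)) shows $A=A\otimes\Bbbk$ is strongly cancellative and strongly m-cancellative; the retractability conclusions follow because the induced center-level isomorphism $\Bbbk[t_1,\dots,t_n]\cong Z(B)[s_1,\dots,s_n]$ forces $Z(B)\cong\Bbbk$, so $\phi(Z(A))=\Bbbk=Z(B)$. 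If $Z(A)\neq\Bbbk$, then $Z(A)$ is an affine commutative domain of GK dimension one not isomorphic to $\Bbbk[t]$, and Lemma \ref{xxlem3.1}(3) makes $Z(A)$ strongly retractable. Restricting any algebra isomorphism of polynomial extensions of $A$ to centers and invoking this retractability yields strong $Z$-retractability of $A$, while the analogous argument with the induced map $\omega$ yields strong m-$Z$-retractability; the Hopficity of $A$ (from noetherianity) combined with \cite[Lemmas 4.4 and 4.6]{LuWZ} — the same upgrading machinery used in the proof of Lemma \ref{xxlem3.6}(2) — then converts retractability into strong cancellativity and strong m-cancellativity.

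The chief obstacle I foresee lies in justifying that last upgrade in the subcase $Z(A)\neq\Bbbk,\Bbbk[t]$: the LuWZ lemmas invoked above are calibrated to an effective element such as $t\in\Bbbk[t]$ and the ensuing LND rigidity, whereas the present $Z(A)$ is a non-normal affine curve by Lemma \ref{xxlem2.4}(2) and lacks a distinguished polynomial generator. If the existing framework does not transplant verbatim, my fallback is to choose a property $\mathcal{P}$ detecting the non-normal locus of $Z(A)$, compute its discriminant ideal directly in $Z(A)$, and adapt Lemma \ref{xxlem3.6}(2) to this setting.
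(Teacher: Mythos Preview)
Your treatment of parts (2)--(4) is essentially identical to the paper's: the same choice of $\mathcal{P}$, the same use of Lemma~\ref{xxlem2.6}, and the same appeal to Lemma~\ref{xxlem3.6}(2).

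For part (1) your argument is correct, but the route differs from the paper's, and your stated concern is misplaced. The paper does \emph{not} split into the cases $Z(A)=\Bbbk$ and $Z(A)\neq\Bbbk$. Instead it runs a single argument: after noting (via Lemmas~\ref{xxlem2.4}(1) and~\ref{xxlem3.1}) that $Z(A)$ is affine and strongly retractable, it takes $\mathcal{P}$ to be the trivially-satisfied property ``is an algebra'', so that $d_{\mathcal{P}}(A)=1$; then \cite[Remark~3.7(6)]{LeWZ} gives that $Z(A)$ is strongly $\mathrm{LND}^H_1$-rigid, and \cite[Proposition~3.7(ii)]{LuWZ} converts this into strong $Z$-retractability and strong m-$Z$-retractability of $A$. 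The final passage to cancellativity via Hopficity and \cite[Lemmas~4.4 and~4.6]{LuWZ} is exactly as you wrote --- and, crucially, the paper invokes those same two lemmas here, with $Z(A)\not\cong\Bbbk[t]$, so your worry that they are ``calibrated to an effective element of $\Bbbk[t]$'' is unfounded: they take strong (m-)$Z$-retractability plus Hopficity as input and require nothing about the shape of the center.

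Your direct argument (restrict $\phi$ or $\omega$ to centers, apply strong retractability of $Z(A)$) is in fact a cleaner shortcut than the paper's detour through $\mathrm{LND}^H_1$-rigidity; it bypasses the LND machinery entirely. The paper's route has the virtue of fitting part (1) into the same $\mathcal{P}$-discriminant template as the other parts, but at the cost of invoking more external results. Either way, your fallback plan involving the non-normal locus is unnecessary.
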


\begin{proof} (1) By Lemma \ref{xxlem2.4}(1), $Z$ is an affine
domain. By Lemma \ref{xxlem3.1}, $Z$ is strongly 
retractable. By taking ${\mathcal P}$ to be a trivial property, 
say being an algebra, the ${\mathcal P}$-discriminant is 1. 
By \cite[Remark 3.7(6)]{LeWZ}, $Z$ is strongly ${\text{LND}}^H_1$-rigid.
By \cite[Proposition 3.7(ii)]{LuWZ}, $A$ is both strongly $Z$-retractable 
and strongly m-$Z$-retractable. Since $A$ is noetherian, it is 
Hopfian in the sense of \cite[Definition 3.4]{LeWZ}. By 
\cite[Lemmas 4.4 and 4.6]{LuWZ}, 
$A$ is strongly cancellative and strongly m-cancellative.

(2) Let ${\mathcal P}$ be the property of having finite 
global dimension. By Lemma \ref{xxlem2.6}, for every
$0\neq \alpha\in \Bbbk$,
$$\gldim A/(t-\alpha)=\gldim (A[t^{-1}])_{0}
=\gldim A/(t-1)<\infty$$
and by the hypothesis, we have that
$$\gldim A/(t)=\infty.$$
Hence the ${\mathcal P}$-discriminant is $t$. The assertion 
follows from Lemma \ref{xxlem3.6}(2). 

(3) By Part (2), it suffices to show that $\gldim A/(t-1)<\infty$.
Since $A$ has finite global dimension, so does $A[t^{-1}]$. 
Then $A[t^{-1}]$ has finite graded global dimension. Since $A$ 
is generated in degree 1, $A[t^{-1}]$ is strongly ${\mathbb Z}$-graded.
Hence $\gldim (A[t^{-1}])_{0}$ is finite. By Lemma \ref{xxlem2.6}, 
$$\gldim A/(t-1)=\gldim (A[t^{-1}])_{0}<\infty$$
as required.

(4) The assertion follows from part (3) and the fact that
an AS-regular algebra has finite global dimension.
\end{proof}

By Lemma \ref{xxlem3.7}(1), the case of $\GKdim Z(A)=1$ is covered
except for $Z(A)=\Bbbk[t]$.

\begin{lemma}
\label{xxlem3.8}
Let $A$ be a noetherian connected graded algebra.
\begin{enumerate}
\item[(1)]
Suppose $Z(A)=\Bbbk[t]$ for some homogeneous element $t$ of positive 
degree. If $(A[t^{-1}])_0$ does not have any nonzero 
finite dimensional left module, then $A$ is strongly $Z$-retractable, 
strongly m-$Z$-retractable, strongly cancellative and strongly 
m-cancellative.
\item[(2)]
Suppose $B$ is a connected graded subalgebra of $A$ satisfying 
\begin{enumerate}
\item[(i)]
$Z(B)=\Bbbk[t]$ for some homogeneous element $t\in B$ of positive 
degree.
\item[(ii)]
$Z(A)\cap Z(B)\neq \Bbbk$.
\item[(iii)]
$(A[t^{-d}])_0$ does not have any nonzero 
finite dimensional left module for some $t^d\in Z(B)\cap Z(A)$,
where $d$ is a positive integer.
\item[(iv)]
$A_B$ is finitely generated and $B$ is noetherian.
\item[(v)]
$A=B\oplus C$ as a right $B$-module. 
\end{enumerate}
Then $B$ is strongly $Z$-retractable, 
strongly m-$Z$-retractable, strongly cancellative and strongly 
m-cancellative.
\end{enumerate}
\end{lemma}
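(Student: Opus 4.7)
The strategy for both parts is to apply Theorem \ref{xxthm1.5} with the Morita-invariant property $\mathcal{P}$ defined on a $\Bbbk$-algebra $R$ by ``$R$ admits no nonzero finite-dimensional left module'' (equivalently, no nonzero finite-dimensional simple left module, since any nonzero finite-dimensional module contains a finite-dimensional simple submodule). Since the relevant center is $\Bbbk[t]$ in each case and every maximal ideal of $\Bbbk[t]$ has the form $(t-\alpha)$, it suffices to show that $\mathcal{P}$ fails exactly at $(t)$; then the $\mathcal{P}$-discriminant is $t$, a nonzero non-unit in $\Bbbk[t]$, and Theorem \ref{xxthm1.5} delivers all four cancellation conclusions (the stability clause is automatic in the algebraically closed setting, as used in the proof of Theorem \ref{xxthm1.5}). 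To verify that $\mathcal{P}$ is Morita invariant, write a Morita equivalence as $C'=\End_C(P)$ for a progenerator $P$; simple $C'$-modules are then $\Hom_C(P,S)$ for simple $C$-modules $S$, and since $P$ is a summand of $C^n$, $\Hom_C(P,S)$ is a summand of $S^n$, so finite-dimensionality of simples is preserved.

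For Part (1), let $\alpha\in\Bbbk^{\times}$. Lemma \ref{xxlem2.6}(1) exhibits $(A[t^{-1}])_0$ as a subalgebra of $A/(t-\alpha)A$, so any finite-dimensional left $A/(t-\alpha)A$-module restricts to a finite-dimensional left $(A[t^{-1}])_0$-module and must therefore vanish by hypothesis; thus $\mathcal{P}$ holds at every $(t-\alpha)$ with $\alpha\neq 0$. On the other hand, $A/tA$ is a nonzero connected graded algebra (since $\deg t>0$), so its trivial module $\Bbbk$ is a nonzero finite-dimensional module and $\mathcal{P}$ fails at $(t)$. Hence the $\mathcal{P}$-discriminant of $A$ equals $t$ up to a unit, and Theorem \ref{xxthm1.5} applies.

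For Part (2), the point is to transfer modules along $B\subseteq A$. Fix $\alpha\neq 0$ and suppose, for contradiction, that $M$ is a nonzero finite-dimensional left $B/(t-\alpha)B$-module. Form $N:=A\otimes_B M$, a left $A$-module. By hypothesis (iv), $A$ is finitely generated as a right $B$-module, so $N$ is finite-dimensional over $\Bbbk$; by (v), $N\cong M\oplus (C\otimes_B M)$ contains the nonzero summand $M$. Using $t^d\in Z(A)\cap B$ from (iii) and that $t$ acts as $\alpha$ on $M$, one computes
\[
t^d(a\otimes m)=(t^d a)\otimes m=(at^d)\otimes m=a\otimes(t^d m)=\alpha^d(a\otimes m),
\]
so $N$ is a nonzero finite-dimensional left $A/(t^d-\alpha^d)A$-module. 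Applying Lemma \ref{xxlem2.6}(1) to the central element $t^d\in A$ realizes $(A[t^{-d}])_0$ as a subalgebra of $A/(t^d-\alpha^d)A$; by hypothesis (iii) the former admits no nonzero finite-dimensional module, yielding a contradiction upon restriction. Therefore $\mathcal{P}$ holds at $(t-\alpha)$ for all $\alpha\neq 0$, while $\mathcal{P}$ fails at $(t)$ because $B/tB$ is a nonzero connected graded algebra with trivial module $\Bbbk$. Since $Z(B)=\Bbbk[t]$ by (i) and $B$ is noetherian by (iv), Theorem \ref{xxthm1.5} applies to $B$ with $\mathcal{P}$-discriminant $t$.

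The delicate points are (a) phrasing $\mathcal{P}$ in terms of simple modules so as to verify Morita invariance cleanly, and (b) the induction-restriction argument in Part (2), where one must exploit all of hypotheses (iii)--(v) simultaneously: (iv) secures finite-dimensionality of $N$, (v) secures that $N$ is nonzero, (iii) supplies the central $t^d\in Z(A)$ needed both to see that $N$ is annihilated by $t^d-\alpha^d$ and to invoke Lemma \ref{xxlem2.6}(1). Implicit throughout is that $t^d$ (resp. $t$) is a central regular element in $A$, which is built into the meaningful use of the localization $(A[t^{-d}])_0$ in hypothesis (iii).
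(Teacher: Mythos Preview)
Your proof is correct. Part~(1) is essentially identical to the paper's argument: same property~$\mathcal{P}$, same use of Lemma~\ref{xxlem2.6}(1), and the paper's citation of Lemma~\ref{xxlem3.6}(2) is effectively your invocation of Theorem~\ref{xxthm1.5}.

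For Part~(2) you take a mildly different route. The paper reduces cleanly to Part~(1) by showing that $(B[t^{-1}])_0=(B[t^{-d}])_0$ itself has no nonzero finite-dimensional module: starting from such a module $M$, it inverts $t^d$ in the decomposition $A=B\oplus C$ and passes to degree~$0$ to obtain $(A[t^{-d}])_0=(B[t^{-d}])_0\oplus (C[t^{-d}])_0$, then tensors up over $(B[t^{-d}])_0$ to contradict~(iii). You instead compute the $\mathcal{P}$-discriminant of $B$ directly: given a finite-dimensional $B/(t-\alpha)B$-module $M$, you induce to $N=A\otimes_B M$, observe it is a finite-dimensional $A/(t^d-\alpha^d)A$-module, and then restrict along $(A[t^{-d}])_0\hookrightarrow A/(t^d-\alpha^d)A$ via Lemma~\ref{xxlem2.6}(1) to contradict~(iii). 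Both arguments use the same ingredients (hypotheses (iii)--(v) and the induction/restriction idea); the paper's version is more modular since it feeds straight into Part~(1), while yours avoids the extra localization step but requires re-verifying the $\mathcal{P}$-discriminant for $B$ from scratch. Your added remark on Morita invariance of $\mathcal{P}$ is a detail the paper leaves implicit.
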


\begin{proof} (1)
Let ${\mathcal P}$ be the property of not having nonzero
finite dimensional left module over an algebra. Since $A$ is connected
graded, ${\mathcal P}$ fails for $A/(t)$. We claim that 
${\mathcal P}$ holds for $A/(t-\alpha)$ for all $\alpha\in \Bbbk^{\times}$.
By the hypothesis, $(A[t^{-1}])_0$ does not have any nonzero 
finite dimensional left module. Since $A/(t-\alpha)$ contains
$(A[t^{-1}])_0$ by Lemma \ref{xxlem2.6}(1), $A/(t-\alpha)$ does 
not have any nonzero finite dimensional left module. So the claim 
holds. Therefore the ${\mathcal P}$-discriminant of
$A$ is $t$. Now the assertion follows from Lemma \ref{xxlem3.6}(2). 

(2) By part (1) it suffices to show that $(B[t^{-1}])_0$ does not 
have any nonzero finite dimensional left module. Note that 
$B[t^{-1}]=B[t^{-d}]$. So it is equivalent to show that 
$(B[t^{-d}])_0$ does not have any nonzero finite dimensional 
left module. We prove this claim by contradiction. Suppose 
%otherwise
$M$ is a nonzero finite dimensional left 
$(B[t^{-d}])_0$-module. By hypotheses (iii)-(iv) and by inverting 
$t^{d}$, $A[t^{-d}]=C[t^{-d}]\oplus B[t^{-d}]$ and $A[t^{-d}]$ is a
finitely generated right $B[t^{-d}]$-module.
Then 
$(A[t^{-d}])_0=(C[t^{-d}])_0\oplus (B[t^{-d}])_0$ and $(A[t^{-d}])_0$ 
is a finitely generated right $(B[t^{-d}])_0$-module. Hence
$(A[t^{-d}])_0\otimes_{(B[t^{-d}])_0} M$ is a nonzero finite 
dimensional left $(A[t^{-d}])_0$-module. This yields a contradiction.
At this point, we have proved that $(B[t^{-1}])_0$ does not 
have any nonzero finite dimensional left module. The assertion follows
from part (1).
\end{proof}

Lemma \ref{xxlem3.8} can be applied to many examples. Here is an 
easy one.

\begin{example}
\label{xxex3.9} 
Suppose ${\text{char}}\; \Bbbk=0$. Let $A$ be a generic 3-dimensional 
Sklyanin algebra generated by $\{x,y,z\}$, see \cite[Introduction]{GKMW} 
for the relations. Then $Z(A)=\Bbbk[g]$ where $g$ is a homogeneous 
element of degree three. Let $G$ be any finite group of graded algebra 
automorphisms of $A$. Let $B$ be the fixed subring $A^G$. Then we claim
that $B$ is strongly $Z$-retractable, strongly m-$Z$-retractable, 
strongly cancellative and strongly m-cancellative. It is easy to see 
that hypotheses (ii), (iii), (iv) and (v) in Lemma \ref{xxlem3.8}(2) 
hold. 
%If hypothesis (i) in Lemma \ref{xxlem3.8}(2) fails, then the 
%claim follows by Lemma \ref{xxlem3.7}(1). If hypothesis (i) in Lemma 
%\ref{xxlem3.8}(2) holds, then the claim follows by Lemma 
%\ref{xxlem3.8}(2).
If hypothesis (i) in Lemma \ref{xxlem3.8}(2) holds, then
it is easy to see that remaining hypotheses hold as well 
and the claim follows by that Lemma. If hypothesis (i) in
Lemma \ref{xxlem3.8}(2) fails, then the result follows by Lemma 
\ref{xxlem3.7}(1).
\end{example}

To conclude this section we give an example of ungraded algebras that 
are cancellative.

\begin{example}
\label{xxex3.10}
Suppose ${\text{char}}\; \Bbbk=0$.
Let $A$ be the universal enveloping algebra $U({\mathfrak g})$
where ${\mathfrak g}$ is a $3$-dimensional non-abelian Lie algebra.
One can use Bianchi classification to list all $3$-dimensional 
non-abelian Lie algebras \cite[Section 1.4]{Ja} as follows. 
\begin{enumerate}
\item[(1)]
${\mathfrak g}=sl_2$.
\item[(2)]
${\mathfrak g}$ is the Heisenberg Lie algebra
\item[(3)]
${\mathfrak g}=L\oplus \Bbbk z$ where $L$ is the $2$-dimensional
non-abelian Lie algebra.
\item[(4)] 
${\mathfrak g}$ has a basis $\{e,f,g\}$ and subject to the following 
relations
\begin{align*}
[e,f]&=0,& [e,g]&=e,& [f,g]&=\alpha f
\end{align*}
where $\alpha\neq 0$.
\item[(5)] 
${\mathfrak g}$ has a basis $\{e,f,g\}$ and subject to the following 
relations
\begin{align*}
[e,f]&=0,& [e,g]&=e+\beta f,& [f,g]&= f
\end{align*}
where $\beta\neq 0$.
\end{enumerate}

For each class, one can verify that $A$ is strongly cancellative. 

(1) See \cite[Example 6.11]{LuWZ}. 

(2) The universal enveloping algebra of the Heisenberg Lie algebra has the 
center $Z=\Bbbk[t]$ with $t$ in the ideal generated by the commutators, 
then the assertion follows from Lemma \ref{xxlem3.5}. 

(3) In this case $U({\mathfrak g})=U(L)\otimes \Bbbk [z]$ with $Z(U(L))=
\Bbbk$. The assertion follows from Lemma \ref{xxlem3.2}.

(4,5) In both cases, one can write $A:=U({\mathfrak g})$ as an Ore extension 
$\Bbbk[e,f][g;\delta]$ for some derivation $\delta$ of the commutative 
polynomial ring $\Bbbk[e,f]$. If the center of $A$ is trivial, then
$A$ is strongly cancellative by \cite[Proposition 1.3]{BZ1}. For the 
rest of proof we assume that $Z(A)\neq \Bbbk$. Note that the derivation
$\delta$ of $\Bbbk[e,f]$ is determined by
\begin{equation}
\label{E3.10.1}\tag{E3.10.1}
\delta: \quad e\longrightarrow -e, \;\; f\longrightarrow -\alpha f
\end{equation}
in part (4), and by 
\begin{equation}
\label{E3.10.2}\tag{E3.10.2}
\delta: \quad e\longrightarrow -(e+\beta f), \;\; f\longrightarrow -f
\end{equation}
in part (5). By an easy calculation, one sees that
$$Z(A)=\{ x\in \Bbbk[e,f]\mid \delta(x)=0\}$$
which is a graded subring of $\Bbbk[e,f]$ (which is inside $A$). Since $A$ 
contains $U(L)$ as a subalgebra where $L$ is the $2$-dimensional non-abelian 
Lie algebra, $A$ is not PI. By Lemma \ref{xxlem2.8}, $\GKdim Z(A)\leq 1$.
Since $Z(A)\neq \Bbbk$ and $\Bbbk$ is algebraically closed, $\GKdim Z(A)
\geq 1$. Thus $\GKdim Z(A)=1$. By Lemma \ref{xxlem2.4}, $Z(A)$ is a domain 
that is finitely generated over $\Bbbk$. If $Z(A)$ is not isomorphic to 
$\Bbbk[t]$, then $Z(A)$ is strongly retractable by Lemma \ref{xxlem3.1}(3). 
%(The next few lines are copied from the proof of Lemma \ref{xxlem3.7}(1).)
Using the proof of Lemma \ref{xxlem3.7}(1), one sees that $A$ is strongly 
cancellative and strongly m-cancellative. 
%By taking ${\mathcal P}$ to be a trivial property, say being an algebra, 
%the ${\mathcal P}$-discriminant is 1. By \cite[Remark 3.7(6)]{LeWZ}, $Z$ 
%is strongly ${\text{LND}}^H_1$-rigid. By \cite[Proposition 3.7(ii)]{LuWZ}, 
%$A$ is both strongly $Z$-retractable and strongly m-$Z$-retractable. Since 
%$A$ is noetherian, it is Hopfian in the sense of \cite[Definition 3.4]{LeWZ}. 
%By \cite[Lemmas 4.4 and 4.6]{LuWZ}, $A$ is strongly cancellative and 
%strongly m-cancellative. 
For the rest, we assume that $Z(A)\cong \Bbbk[t]$ 
for some homogeneous element $t$ in $\Bbbk[e,f]$. By the form of $\delta$ 
in \eqref{E3.10.1}-\eqref{E3.10.2}, the degree of $t$ is at least $2$. This 
implies that $\Bbbk[e,f]/(t)$ has infinite global dimension. On the other 
hand, if $\alpha\neq 0$, then $\Bbbk[e,f]/(t-\alpha)$ has finite global 
dimension (applying Lemma \ref{xxlem2.6} to the algebra $\Bbbk[e,f]$). 
Therefore $A/(t)=(\Bbbk[e,f]/(t))[g;\delta]$ has infinite dimensional 
dimension and $A/(t-\alpha)=(\Bbbk[e,f]/(t-\alpha))[g;\delta]$ has finite 
global dimension. Then the argument similar to the proof of
Lemma \ref{xxlem3.7}(2) shows that $A$ is strongly cancellative.
%By the way the center of $U({\mathfrak g})$ can explicitly be worked out, 
%see \cite[Example 14.4.2]{MR} for some hints. 
\end{example}

One obvious question after Example \ref{xxex3.10} is whether or not 
every universal enveloping algebra of a $4$-dimensional non-abelian 
Lie algebra is cancellative.

\section{Proof of Theorems \ref{xxthm0.2}, \ref{xxthm0.3} and 
Corollary \ref{xxcor0.5}}
\label{xxsec4}

In this section we prove some of the main results listed in 
the introduction. We start with Theorem \ref{xxthm0.3}.

\begin{proof}[Proof of Theorem \ref{xxthm0.3}]
If $Z(A)$ is not isomorphic to $\Bbbk[t]$, the assertion follows from 
Lemma \ref{xxlem3.7}(1). If $Z(A)$ is isomorphic to $\Bbbk[t]$, 
the assertion follows from Lemma \ref{xxlem3.7}(3).
\end{proof}

\begin{proof}[Proof of Theorem \ref{xxthm0.2}]
Note that every AS-regular algebra has finite global dimension. If 
$A$ is not PI, then by Lemma \ref{xxlem2.8}, 
$$\GKdim Z \leq \GKdim A-2=3-2=1.$$
If $Z(A)$ is not isomorphic to $\Bbbk[t]$, the assertion follows from 
Lemma \ref{xxlem3.7}(1). If $Z(A)$ is isomorphic to $\Bbbk[t]$ and if 
$\gldim A/(t)=\infty$, the assertion follows from Lemma 
\ref{xxlem3.7}(3). For the rest of the proof we assume that
$Z(A)=\Bbbk[t]$ and $A/(t)$ has finite global dimension.

By Rees Lemma, $\gldim A/(t)\leq 2$. By a Hilbert series computation, 
we obtain that $\GKdim A/(t)=2$. This implies that $A/(t)$ is AS-regular 
of global dimension two. Since we assume that $\Bbbk$ is algebraically 
closed, $A/(t)$ is either $\Bbbk_q[x,y]$ or $\Bbbk_{J}[x,y]$. In particular, 
the Hilbert series of $A/(t)$ is $\frac{1}{(1-s)^2}$. Since $A$ is 
AS-regular of global dimension three, it is generated by either $3$ elements or
$2$ elements. Next we consider these two cases.

Case 1: $A$ is generated by two elements. Then the Hilbert series of
$A$ is $\frac{1}{(1-s)^2(1-s^2)}$. This forces that $\deg t=2$. If $A/(t)=
\Bbbk_q[x,y]$, then $t=xy-qyx$ and $A/(t-1)=\Bbbk\langle x,y \rangle/(xy-qyx-1)$.
If $q=1$, let ${\mathcal P}$ be the property of {\bf not} being Morita 
equivalent to $A/(t)$. Then the ${\mathcal P}$-discriminant is $t$ by Lemma 
\ref{xxlem2.1}(1). Now the assertion 
follows from Lemma \ref{xxlem3.6}(2).

If $q\neq 1$, we claim that $q$ is not a root of unity. If $q$ is a 
root of unity, then $A/(t-1)$ is PI. By Lemma \ref{xxlem2.6}(1),
$(A[t^{-1}])_0$ is PI. Note that $A^{(2)}[t^{-1}]=(A[t^{-1}])_0 [t^{\pm 1}]$.
So $A^{(2)}[t^{-1}]$ is PI. Consequently, $A^{(2)}$ is PI and 
whence $A$ is PI, a contradiction. Then by the argument in the previous paragraph with 
Lemma \ref{xxlem2.1}(2) being replaced by Lemma \ref{xxlem2.1}(1), one
sees that $A$ is strongly cancellative and strongly m-cancellative.

If $A/(t)=\Bbbk_{J}[x,y]$, then $t=xy-yx+x^2$ and 
$A/(t-1)=\Bbbk\langle x,y \rangle/(xy-yx+x^2-1)$.
Let ${\mathcal P}$ be the property of being Morita equivalent to 
$A/(t-1)$. Then the ${\mathcal P}$-discriminant is $t$ by Lemma 
\ref{xxlem2.1}(3). By Lemma \ref{xxlem3.6}(2), 
$A$ is strongly cancellative and strongly m-cancellative.

Case 2: $A$ is generated by three elements. Then the Hilbert series of
$A$ is $\frac{1}{(1-s)^3}$. If $A$ is isomorphic to 
$A'\otimes \Bbbk[t]$ for some algebra $A'$, then $Z(A')$ is trivial
and the assertion follows from Lemma \ref{xxlem3.2}. So we further
assume that $A$ is not a tensor product of two nontrivial algebras.
In this case $A$ is generated by $x,y,t$ subject to three relations
$$\begin{aligned}
xt-tx&=0,\\
yt-ty&=0,
\end{aligned}
$$
\begin{equation}
\label{E4.0.1}\tag{E4.0.1}
xy-qyx= ft+\epsilon t^2, \quad 
{\text{or}}\quad 
xy-yx+x^2=ft+\epsilon t^2
\end{equation}
where $\epsilon$ is either $0$ or $1$ and $f$ is a linear combination
of $x$ and $y$. 

Again we have three cases. If $q=1$ in the first type of \eqref{E4.0.1}, 
using Lemmas \ref{xxlem2.1}(1) and \ref{xxlem3.6}(2), one sees that
$A$ is strongly cancellative and strongly m-cancellative. If $q\neq 1$
in the first type of \eqref{E4.0.1},
then we can assume that $f=0$ and $\epsilon=1$ after a base change. 
Since $A$ is not PI, $q$ is not a root of unity. Then we use 
Lemma \ref{xxlem2.1}(2) instead of Lemma \ref{xxlem2.1}(1).
Otherwise we have the relation
$$xy-yx+x^2-ft-\epsilon t^2=0.$$
Up to a base change, we may assume that $\epsilon=0$. Then we have 
either $xy-yx+x^2-xt=0$ or $xy-yx+x^2-yt=0$. In the case of
$xy-yx+x^2-xt=0$, using Lemmas \ref{xxlem2.1}(4)
and \ref{xxlem3.6}(2), one sees that
$A$ is strongly cancellative and strongly m-cancellative. 
In the case of
$xy-yx+x^2-yt=0$, using Lemmas \ref{xxlem2.1}(5)
and \ref{xxlem3.6}(2), one sees that
$A$ is strongly cancellative and strongly m-cancellative. 
\end{proof}

For the rest of this section we study cancellation property for 
some graded isolated singularities. In noncommutative algebraic 
geometry, Ueyama gave the following definition of a graded 
isolated singularity.

\begin{definition} \cite[Definition 2.2]{Ue}
\label{xxdef4.1}
Let $A$ be a noetherian connected graded algebra.
Then $A$ is called a {\it graded isolated singularity} if
\begin{enumerate}
\item[(1)]
$\gldim A$ is infinite.
\item[(2)] 
The associated noncommutative projective scheme ${\text{Proj}}\; A$ 
(in the sense of \cite{AZ}) has finite global dimension. 
\end{enumerate}
\end{definition}

Examples of graded isolated singularities are given in 
\cite{CYZ3, GKMW, MU1, MU2, Ue}. One nice example of graded 
isolated singularities is the fixed subring of the generic 
Sklyanin algebra under the cyclic permutation action 
\cite[Theorem 5.2]{GKMW} which is cancellative by Example 
\ref{xxex3.9}. 

\begin{lemma}
\label{xxlem4.2} 
Suppose ${\text{char}}\; \Bbbk=0$.
Let $A$ be a graded isolated singularity generated in degree one
and $t\in B$ be a central regular element of positive degree. Then 
$A/(t-\alpha)$, for every $0\neq \alpha\in \Bbbk$, has 
finite global dimension.
\end{lemma}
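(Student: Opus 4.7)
The plan is to apply Lemma~\ref{xxlem2.6}(2) to reduce the claim to showing that $(A[t^{-1}])_0$ has finite global dimension, and then to exploit the hypothesis $\gldim \textup{Proj}(A) < \infty$ by viewing $(A[t^{-1}])_0$ as playing the role of the ``coordinate ring'' of the basic affine open $D_+(t) \subseteq \textup{Proj}(A)$.

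First, since $\textup{char}\,\Bbbk = 0$ the degree $d := \deg t$ is nonzero in $\Bbbk$, so Lemma~\ref{xxlem2.6}(2) gives $\gldim A/(t-\alpha) = \gldim (A[t^{-1}])_0$, reducing the problem to proving $\gldim (A[t^{-1}])_0 < \infty$. Because $A$ is generated in degree one, $A_{2d} = A_{d+1} \cdot A_{d-1}$; in particular one can write $t^2 = \sum a_i b_i$ with $a_i \in A_{d+1}$ and $b_i \in A_{d-1}$, and then $1 = \sum (a_i/t)(b_i/t) \in A[t^{-1}]_1 \cdot A[t^{-1}]_{-1}$. Standard iteration of this shows that $A[t^{-1}]$ is strongly $\mathbb{Z}$-graded, so by Dade's theorem the category of graded $A[t^{-1}]$-modules is equivalent to the category of $(A[t^{-1}])_0$-modules, and $\gldim (A[t^{-1}])_0$ equals the graded global dimension of $A[t^{-1}]$.

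The substantive step is then to bound the graded global dimension of $A[t^{-1}]$ using the finite global dimension of $\textup{Proj}(A) = \textup{QGr}(A)$. Given finitely generated graded $A[t^{-1}]$-modules $M', N'$, write them as $M' = M[t^{-1}]$ and $N' = N[t^{-1}]$ for finitely generated graded $A$-modules $M, N$. Flatness of the localization $A \to A[t^{-1}]$ together with the finite presentation of $M$ yields $\Ext^i_{A[t^{-1}]}(M', N') \cong \Ext^i_A(M,N)[t^{-1}]$. The Artin--Zhang triangle $R\Gamma_{\fm}(N) \to N \to R\omega\,\pi(N) \to$, where $\fm = A_{\geq 1}$ is the irrelevant ideal, relates $\Ext^i_A(M,N)$ to $\Ext^i_{\textup{QGr}(A)}(\pi M, \pi N)$, with a discrepancy that is $\fm$-torsion and hence $t$-torsion (since $t \in \fm$). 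Localizing at $t$ kills this discrepancy, yielding $\Ext^i_A(M,N)[t^{-1}] \cong \Ext^i_{\textup{QGr}(A)}(\pi M, \pi N)[t^{-1}]$, and the right side vanishes for $i > \gldim \textup{Proj}(A)$.

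The hard part will be justifying this last comparison via the Artin--Zhang machinery: carefully producing the long exact sequence between $\Ext$ in the category of graded $A$-modules and $\Ext$ in $\textup{QGr}(A)$, and verifying that $\Ext^i_A(M, K)$ is $\fm$-torsion whenever $K$ is $\fm$-torsion and $M$ is finitely generated, so that every error term indeed dies upon inverting $t$. Once this comparison is in place, the preceding reductions combine with Lemma~\ref{xxlem2.6}(2) to finish the proof.
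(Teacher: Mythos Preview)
Your proposal is correct and follows the same overall strategy as the paper: reduce via Lemma~\ref{xxlem2.6}(2) to showing that $(A[t^{-1}])_0$ has finite global dimension, identify this with the graded global dimension of $A[t^{-1}]$ using that $A[t^{-1}]$ is strongly $\mathbb{Z}$-graded, and then bound the latter by $\gldim \textup{Proj}\,A$.

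The only difference lies in how the final inequality $\gldim \textup{GrMod}\,A[t^{-1}] \le \gldim \textup{Proj}\,A$ is obtained. The paper dispatches this in one line by observing that there is a localizing functor $\textup{Proj}\,A \to \textup{GrMod}\,A[t^{-1}]$: indeed, since $\fm$-torsion modules are $t$-torsion, the exact functor $M \mapsto M[t^{-1}]$ factors through $\textup{QGr}(A)$, and its right adjoint (restriction along $A \to A[t^{-1}]$ followed by $\pi$) is exact and fully faithful, which immediately gives $\Ext^i_{\textup{GrMod}\,A[t^{-1}]}(M,N) \cong \Ext^i_{\textup{QGr}(A)}(sM,sN)$. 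Your plan instead unpacks this categorical fact by hand via the Artin--Zhang triangle $R\Gamma_{\fm}(N) \to N \to R\omega\pi(N)$, checking that the error terms are $\fm$-torsion and hence die upon inverting $t$. That computation is correct (for finitely generated $M$ the groups $\Ext^i_A(M,K)$ are $\fm$-torsion whenever $K$ is, as you note), and it recovers exactly the same inequality; it is simply a more explicit route to a fact the paper takes as known.
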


\begin{proof} By Lemma \ref{xxlem2.6}(2) it suffices to show 
that $(A[t^{-1}])_0$ has finite global dimension. Since
there is a localizing functor from ${\text{Proj}}\; A$ 
to ${\text{GrMod}}\; A[t^{-1}]$, 
$$\gldim {\text{GrMod}}\; A[t^{-1}]\leq \gldim {\text{Proj}}\; A<\infty.$$
It is well-known that
$$\gldim (A[t^{-1}])_0=\gldim {\text{GrMod}}\; A[t^{-1}]$$
as $A[t^{-1}]$ is strongly ${\mathbb Z}$-graded. The assertion
follows.
\end{proof}

\begin{theorem}
\label{xxthm4.3}
Let $A$ be a noetherian connected graded domain generated in 
degree $1$. Suppose
\begin{enumerate}
\item[(1)]
${\text{char}}\; \Bbbk=0$, and
\item[(2)]
$\GKdim Z(A)\leq 1$, and
\item[(3)]
$A$ is a graded isolated singularity.
\end{enumerate}
Then $A$ is strongly cancellative and strongly m-cancellative.
\end{theorem}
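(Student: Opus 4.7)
The plan is to split the argument into two cases depending on the structure of the center $Z(A)$. Since $A$ is a connected graded domain with $\GKdim Z(A)\leq 1$, the center is a connected graded commutative domain of GK dimension at most one, hence finitely generated and noetherian by Lemma \ref{xxlem2.4}(1).

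If $Z(A)\not\cong \Bbbk[t]$, then the hypotheses of Lemma \ref{xxlem3.7}(1) are satisfied (noetherian connected graded domain, $\GKdim Z(A)\leq 1$, and $Z(A)\not\cong \Bbbk[t]$), and that lemma directly delivers strong $Z$-retractability, strong m-$Z$-retractability, strong cancellation, and strong m-cancellation. No further work is needed in this case.

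If instead $Z(A)\cong \Bbbk[t]$ for some homogeneous generator $t$ of positive degree $d$, the plan is to verify the two hypotheses of Lemma \ref{xxlem3.7}(2), namely $\gldim A/(t-1)<\infty$ and $\gldim A/(t)=\infty$. The first follows at once from Lemma \ref{xxlem4.2} applied with $\alpha=1$ (the element $t$ is regular since $A$ is a domain). For the second I would argue by contradiction: if $\gldim A/(t)<\infty$, then the short exact sequence
$$0\longrightarrow A(-d)\xrightarrow{\;\cdot t\;} A\longrightarrow A/(t)\longrightarrow 0$$
exhibits $A/(t)$ as an $A$-module of projective dimension one, and a standard change-of-rings (Rees Lemma) inequality yields $\gldim A\leq \gldim A/(t)+1<\infty$, contradicting condition (1) of Definition \ref{xxdef4.1}. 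With both hypotheses established, Lemma \ref{xxlem3.7}(2) provides the desired conclusion.

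The proof is essentially an assembly of Lemmas \ref{xxlem3.7}(1), \ref{xxlem3.7}(2), and \ref{xxlem4.2}; the only mild subtlety is that Lemma \ref{xxlem2.7} is stated only for degree-one central elements, whereas here $t$ may have arbitrary positive degree $d$. This extension is routine via the projective resolution displayed above, so I do not anticipate a substantial obstacle. The genuinely new input from this section is Lemma \ref{xxlem4.2}, which converts the graded isolated singularity hypothesis into finite global dimension of $A/(t-1)$ via the equality $\gldim A/(t-1)=\gldim (A[t^{-1}])_0$ and the localization $\operatorname{Proj} A\to \operatorname{GrMod} A[t^{-1}]$; combining this with the Rees-type direction gives precisely the asymmetry between $\alpha=0$ and $\alpha\neq 0$ that makes $t$ the $\mathcal{P}$-discriminant for the property ``having finite global dimension.''
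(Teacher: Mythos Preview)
Your proposal is correct and follows essentially the same route as the paper: split on whether $Z(A)\cong\Bbbk[t]$, invoke Lemma~\ref{xxlem3.7}(1) in the first case, and in the second case use Lemma~\ref{xxlem4.2} for $\gldim A/(t-1)<\infty$ together with a Rees-type argument for $\gldim A/(t)=\infty$, then conclude via Lemma~\ref{xxlem3.7}(2). The paper simply cites Lemma~\ref{xxlem2.7} for the latter step without commenting on the degree of $t$; your observation that the stated degree-one hypothesis is inessential (the change-of-rings inequality works for any positive degree) is a fair point and the direct argument you supply is exactly what is needed.
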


\begin{proof} 
If $Z(A)\not \cong \Bbbk[t]$, then the assertion follows from Lemma
\ref{xxlem3.7}(1). Now we assume that $Z(A)\cong \Bbbk[t]$ 
where $t$ can be chosen to be a homogeneous element of positive 
degree. Since $A$ has infinite global dimension, so does $A/(t)$ by
Lemma \ref{xxlem2.7}. For every $0\neq \alpha\in \Bbbk$,
by Lemma \ref{xxlem4.2}, $A/(t-\alpha)$ has finite global dimension.
The assertion follows from Lemma \ref{xxlem3.7}(2).
\end{proof}

Now we are ready to prove Corollary \ref{xxcor0.5}.

\begin{proof}[Proof of Corollary \ref{xxcor0.5}]
If $d=1$, then it follows from Theorem \ref{xxthm0.2}. 

Next we assume that $d>1$. Note that the Hilbert series of $A$ is either 
$\frac{1}{(1-s)^3}$ or $\frac{1}{(1-s)^2(1-s^2)}$. By an easy computation,
the Hilbert series of $A^{(d)}$ can not be of the form 
$\frac{1}{f(s)}$ for some polynomial $f(s)$. By \cite[Theorem 2.4]{StZ} 
and the argument before it, $A^{(d)}$ does not have finite global dimension. 
By \cite[Proposition 5.10(3)]{AZ}, $A^{(d)}$ is a graded isolated 
singularity. Since $A$ is not PI, $\GKdim Z(A)\leq 1$ by Lemma 
\ref{xxlem2.8}. The assertion follows from Theorem \ref{xxthm4.3}.
\end{proof}

\subsection*{Acknowledgments}
The authors thank referees for their careful reading and
valuable comments and for allowing them to include Remark 
\ref{xxrem2.2} in this paper.
X. Tang thanks J.J. Zhang for the hospitality during his short
visit to the University of Washington; and some travel support from 
Bill \& Melinda Gates Foundation is gratefully acknowledged. 
H. Venegas Ram{\' i}rez is very grateful to J.J. Zhang and the 
Department of Mathematics at the University of Washington, for 
their hospitality, advice and discussion about the subject during 
his internship. H. Venegas Ram{\' i}rez was partially supported 
by Colciencias (doctoral scholarship 757). J.J. Zhang was 
partially supported by the US National Science Foundation 
(Nos. DMS-1700825 and DMS-2001015).

%\section{Questions}
%\label{xxsec20}

%Q1: Is $\Bbbk_{q}[x^{\pm 1}, y^{\pm 1}, z^{\pm 1}][t]$ cancellative?

%Q2: Let $H$ be an affine Hopf domain (or prime) of GK dimension 2.
%Is then $H$ cancellative?

%Q3: If ${\text{char}}\; \Bbbk>0$, is then $\Bbbk_{J}[x,y]$ cancellative?

\providecommand{\bysame}{\leavevmode\hbox to3em{\hrulefill}\thinspace}
\providecommand{\MR}{\relax\ifhmode\unskip\space\fi MR }
\providecommand{\MRhref}[2]{%

\href{http://www.ams.org/mathscinet-getitem?mr=#1}{#2} }
\providecommand{\href}[2]{#2}

\end{document}